\def\SYoung#1{\vbox{\smallskip\offinterlineskip
    \halign{&\vbox{##}\kern-\SThickness\cr #1}}}
\newdimen\SSquaresize \SSquaresize=4.5pt
\newdimen\SThickness \SThickness=.15pt
\newdimen\SCorrection \SCorrection=7pt
\def\SCarre#1{\hbox{\vrule width \SThickness
   \vbox to \SSquaresize{\hrule height \SThickness\vss
      \hbox to \SSquaresize{\hss$\scriptstyle#1$\hss}
   \vss\hrule height\SThickness}
   \unskip\vrule width \SThickness}
   \kern-\SThickness}
\makeatletter \@addtoreset{equation}{section}
\newtheorem{theorem}{Theorem}[section]
\newtheorem{lemma}[theorem]{Lemma}
\newtheorem{remark}[theorem]{Remark}
\title[A simple proof of higher order Tur\'{a}n inequalities]{A simple proof of higher order Tur\'{a}n inequalities for Boros-Moll sequences}
\author[James J. Y. Zhao]{James Jing Yu Zhao}
       \address{School of Accounting, Guangzhou College of Technology and Business,
       Foshan 528138, People's Republic of China.}
       \email{zhao@gzgs.edu.cn}
\keywords{Log-concavity; higher order Tur\'{a}n inequalities; Boros-Moll sequences}
\begin{document}

\begin{abstract}
Recently, the higher order Tur\'{a}n inequalities for the Boros-Moll sequences $\{d_\ell(m)\}_{\ell=0}^m$ were obtained by Guo. In this paper, we show a different approach to this result. Our proof is based on a criterion derived by Hou and Li, which need only checking four simple inequalities related to sufficiently sharp bounds for $d_\ell(m)^2/(d_{\ell-1}(m)d_{\ell+1}(m))$. In order to do so, we adopt the upper bound given by Chen and Gu in studying the reverse ultra log-concavity of Boros-Moll polynomials, and establish a desired lower bound for $d_\ell(m)^2/(d_{\ell-1}(m)d_{\ell+1}(m))$ which also implies the log-concavity of $\{\ell! d_\ell(m)\}_{\ell=0}^m$ for $m\geq 2$.
We also show a sharper lower bound for $d_\ell(m)^2/(d_{\ell-1}(m)d_{\ell+1}(m))$ which may be available for some deep results on inequalities of Boros-Moll sequences.
\end{abstract}

\subjclass{Primary 05A20; 11B83}

\thanks{This work
was partially supported by the National Natural Science Foundation of China Grant No. 11971203.}

\maketitle

\section{Introduction}

This paper is concerned with the higher order Tur\'{a}n inequalities for the Boros-Moll sequences $\{d_\ell(m)\}_{\ell=0}^m$. The term $d_\ell(m)$ is the coefficient of $x^\ell$ in the Boros-Moll polynomials
\begin{align}\label{eq:B-M-poly1}
P_m(x)
=\sum_{j,k} \binom{2m+1}{2j}\binom{m-j}{k}\binom{2k+2j}{k+j}
 \frac{(x+1)^j(x-1)^k}{2^{3(k+j)}},
\end{align}
which arise in the following evaluation of a quartic integral
$$
\int_0^\infty\frac{1}{(t^4+2xt^2+1)^{m+1}} dt
=\frac{\pi}{2^{m+3/2}(x+1)^{m+1/2}} P_m(x)
$$
for $x>-1$ and $m\in\mathbb{N}$.
Using Ramanujan's Master Theorem, \eqref{eq:B-M-poly1} can be restated as
\begin{align*}
P_m(x)=2^{-2m}\sum_{k=0}^m 2^k \binom{2m-2k}{m-k}\binom{m+k}{k}(x+1)^k.
\end{align*}
So, the formula of $d_\ell(m)$ is given by
\begin{align}\label{eq:B-M-seq}
d_\ell(m)=2^{-2m}\sum_{k=\ell}^m 2^k \binom{2m-2k}{m-k}\binom{m+k}{k}\binom{k}{\ell},
\end{align}
for $m\geq \ell \geq 0$,
see \cite{Boros-Moll-1999, Boros-Moll-2001, Boros-Moll-2004, Moll2002}.

A sequence $\{a_n\}_{n\geq 0}$ with real numbers is said to satisfy the \emph{Tur\'{a}n inequalities} or to be \emph{log-concave} if
\begin{align}\label{eq:df-log-concave}
a_n^2-a_{n-1}a_{n+1}\geq 0
\end{align}
for any $n\geq 1$.
The Tur\'{a}n inequalities \eqref{eq:df-log-concave} are also called the Newton's inequalities \cite{Craven-Csordas1989, CNV1986, Niculescu2000}.
A polynomial is said to be log-concave if the sequence of its coefficients is log-concave.

Boros and Moll \cite{Boros-Moll-2004} introduced the notion of infinite log-concavity and conjectured that the sequence $\{d_\ell(m)\}_{\ell=0}^m$ is infinitely log-concave, which received considerable attention. A number of interesting results were obtained.
Moll \cite{Moll2002} posed a conjecture that $\{d_\ell(m)\}_{\ell=0}^m$ is log-concave, which was later proved by Kauers and Paule \cite{Kauers-Paule} with a symbolic method called \emph{computer algebra}.
Chen, Dou, and Yang \cite{Chen-Dou-Yang} proved two conjectures of Br\"{a}nd\'{e}n \cite{Branden} concerning the Boros-Moll polynomials, and hence obtained $2$-log-concavity and $3$-log-concavity of $P_m(x)$. Chen and Xia \cite{Chen-Xia} also showed that $P_m(x)$ are ratio monotone which implies the log-concavity and the spiral property. Moreover, Chen and Gu \cite{Chen-Gu2009} proved the reverse ultra log-concavity of $P_m(x)$. See \cite{CWX2011, Chen-Xia-2} for more results on Boros-Moll polynomials.

A real sequence $\{a_n\}_{n\geq 0}$ is said to satisfy the \emph{higher order Tur\'{a}n inequalities} or cubic Newton inequalities if for all $n\geq 1$,
\begin{align}\label{eq:ho-Turan}
 4(a_n^2-a_{n-1}a_{n+1})(a_{n+1}^2-a_n a_{n+2})
-(a_{n} a_{n+1}-a_{n-1} a_{n+2})^2 \geq 0,
\end{align}
see \cite{Dimitrov1998, Niculescu2000, Rosset1989}.
The Tur\'{a}n inequalities and the higher order Tur\'{a}n inequalities are related to the Laguerre-P\'{o}lya class of real entire functions, see \cite{Dimitrov1998, Szego1948}.
A real entire function $$\psi(x)=\sum_{n=0}^{\infty} a_n \frac{x^n}{n!}$$ is said to belong to the Laguerre-P\'{o}lya class, denoted by $\psi \in \mathcal{L}$-$\mathcal{P}$, if
$$\psi(x)=c x^m e^{-\alpha x^2+\beta x}\prod_{k=1}^{\infty}(1+x/x_k)e^{-x/x_k},$$ where $c,\beta,x_k$ are real, $\alpha\geq 0$, $m$ is a nonnegative integer and $\sum x_k^{-2}<\infty$.
Jensen \cite{Jensen} proved that a real entire function $\psi(x)\in \mathcal{L}$-$\mathcal{P}$ if and only if for any integer $n>0$, the $n$-th associated Jenson polynomial
$$
J_n(x)=\sum_{k=0}^n \binom{n}{k} a_k x^k
$$
are hyperbolic, i.e., $J_n(x)$ has only real zeros. 
This result was also obtained by P\'{o}lya and Schur \cite{Polya-Schur1933}.
Besides, if a real entire function $\psi \in \mathcal{L}$-$\mathcal{P}$, then its Maclaurin coefficients satisfy \eqref{eq:df-log-concave}, see \cite{Craven-Csordas1989, Csordas-Varga1990}.
Moreover, Dimitrov \cite{Dimitrov1998} proved that the higher order Tur\'{a}n inequalities \eqref{eq:ho-Turan}, an extension of \eqref{eq:df-log-concave}, is also a necessary condition for  $\psi \in \mathcal{L}{\textrm{-}}\mathcal{P}$.

The $\mathcal{L}$-$\mathcal{P}$ class has close relation with the Riemann hypothesis. Let $\zeta$ and $\Gamma$ denote the Riemann zeta function and the gamma function, respectively. The Riemann $\xi$-function is defined by
\begin{equation*}
\xi(iz)=\frac{1}{2}\left(z^2-\frac{1}{4}\right)\pi^{-\frac{z}{2}-\frac{1}{4}}
  \Gamma\left(\frac{z}{2}+\frac{1}{4}\right)
  \zeta\left(z+\frac{1}{2}\right),
\end{equation*}
see Boas \cite{Boas1954}.
It is well known that the Riemann $\xi$-function is an entire function of order $1$ and can be restated as
\begin{equation}\label{eq:R-xi-r}
 \frac{1}{8}\,\xi\!\left(\frac{x}{2}\right)
=\sum\limits_{k=0}^{\infty}(-1)^k\, \hat{b}_k\, \frac{x^{2k}}{(2k)!},
\end{equation}
where
\begin{align*}
\hat{b}_k=\int_{0}^{\infty} t^{2t} \Phi(t) dt
\quad \textrm{and} \quad
\Phi(t)=\sum_{n=0}^\infty (2n^4 \pi^2 e^{9t} - 3n^2 \pi e^{5t}) \exp(-n^2 \pi e^{4t}),
\end{align*}
see P\'{o}lya \cite{Polya1927}. Set $z=-x^2$ in \eqref{eq:R-xi-r}. Then one obtain an entire function of order $1/2$, denoted by $\xi_1(z)$, that is,
\begin{equation*}
\xi_1(z)=\sum_{k=0}^\infty \frac{k!}{(2k)!}\hat{b}_k \frac{z^k}{k!}.
\end{equation*}
So the Riemann hypothesis holds if and only if $\xi_1(z)\in \mathcal{L}$-$\mathcal{P}$. See \cite{CJW2019, CNV1986, Dimitrov1998} for more details.

For the deep relation stated above, the higher order Tur\'{a}n inequalities have  received great attention. Many interesting sequences were showed to satisfy the higher order Tur\'{a}n inequalities \eqref{eq:ho-Turan}. For instance, the Riemann $\xi$-function was proved to satisfy \eqref{eq:ho-Turan} by Dimitrov and Lucas \cite{DL2011}.
By using the Hardy-Ramanujan-Rademacher formula, Chen, Jia, and Wang \cite{CJW2019} proved that the partition function $p(n)$ satisfies the higher order Tur\'{a}n inequalities for $n\geq 95$ and the $3$-rd associated Jensen polynomials $\sum_{k=0}^3 \binom{3}{k} p(n+k)x^k$ have three distinct zeros, and hence confirm a conjecture of Chen \cite{Chen2017}.

Griffin, Ono, Rolen, and Zagier \cite{GORZ2019} showed that Jensen polynomials for a large family of functions, including those associated to $\zeta(s)$ and the partition function $p(n)$, are hyperbolic for sufficiently large $n$. This result gave evidence for Riemann hypothesis.
Wang \cite{Wang2019} gave a sufficient condition to \eqref{eq:ho-Turan} and obtained some higher order Tur\'{a}n inequalities for sequences $\{S_n/n!\}_{n\geq 0}$, where $S_n$ are the Motzkin numbers, the Fine numbers, the Franel numbers of order $3$ and the Domb numbers. Hou and Li \cite{Hou-Li2021} presented a different sufficient condition on the higher order Tur\'{a}n inequalities for $n\geq N$ and also developed an algorithm to find the number $N$ for certain kind of sequences.

Recently, Guo \cite{Guo} proved the following result by showing an equivalent form of the higher order Tur\'{a}n inequalities.
\begin{theorem}[Guo]\label{thm:ht-B-M-ell}
For $m\geq 3$ and $1\leq \ell \leq m-2$, the sequence $\{d_\ell(m)\}_{\ell=0}^m$ satisfies the higher order Tur\'{a}n inequalities. That is,
\begin{align}\label{eq:ht-B-M-ell}
& 4(d_\ell(m)^2-d_{\ell-1}(m)d_{\ell+1}(m))
 (d_{\ell+1}(m)^2-d_\ell(m)d_{\ell+2}(m)) \\
&\ \geq
 (d_\ell(m)d_{\ell+1}(m)-d_{\ell-1}(m)d_{\ell+2}(m))^2.\nonumber
\end{align}
\end{theorem}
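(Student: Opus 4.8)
The plan is to deduce \eqref{eq:ht-B-M-ell} from sufficiently tight two-sided bounds on the ratios
\[
r_\ell=r_\ell(m):=\frac{d_\ell(m)^2}{d_{\ell-1}(m)\,d_{\ell+1}(m)},\qquad 1\le \ell\le m-1,
\]
via the criterion of Hou and Li \cite{Hou-Li2021}. The first step is a purely formal reduction. Dividing \eqref{eq:ht-B-M-ell} by $d_{\ell-1}(m)d_\ell(m)d_{\ell+1}(m)d_{\ell+2}(m)>0$ and using $r_\ell\,r_{\ell+1}=\frac{d_\ell(m)d_{\ell+1}(m)}{d_{\ell-1}(m)d_{\ell+2}(m)}$, one checks that \eqref{eq:ht-B-M-ell} is equivalent to
\[
4(s-1)(t-1)\,st\ \ge\ (st-1)^2,\qquad s=r_\ell,\ \ t=r_{\ell+1},
\]
and then, writing $x=s-1\ge 0$ and $y=t-1\ge 0$ (nonnegativity being the log-concavity of $\{d_\ell(m)\}$, already known from \cite{Kauers-Paule}), that it is equivalent to
\[
(x-y)^2\ \le\ 2xy(x+y)+3x^2y^2.
\]
Hence it suffices to have, for $1\le \ell\le m-2$, bounds $L_\ell\le r_\ell\le U_\ell$ tight enough to force the corresponding $x$'s and $y$'s to obey the last inequality; the Hou--Li criterion packages precisely this into the verification of four explicit inequalities in $L_\ell,U_\ell,L_{\ell+1},U_{\ell+1}$. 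So the proof comes down to (i) a good upper bound $U_\ell$, (ii) a matching lower bound $L_\ell$, and (iii) checking the four inequalities.

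For (i) I would take the upper bound that Chen and Gu \cite{Chen-Gu2009} used in proving the reverse ultra log-concavity of $P_m(x)$: an estimate $r_\ell\le U_\ell$ with $U_\ell$ an explicit rational function of $\ell$ and $m$ (of order $\frac{(\ell+1)(m-\ell+1)}{\ell(m-\ell)}$). The substance of the argument is (ii): I would establish a lower bound $r_\ell\ge L_\ell$ with $L_\ell$ of the \emph{same shape} as $U_\ell$ and, in particular, $L_\ell\ge \frac{\ell+1}{\ell}$ --- the latter being exactly the log-concavity of $\{\ell!\,d_\ell(m)\}_{\ell=0}^m$ for $m\ge 2$. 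The natural route is induction on $m$ through one of the standard three-term recurrences for $\{d_\ell(m)\}$ (see \cite{Boros-Moll-2004, Kauers-Paule}): assuming the bound at level $m$, substitute the recurrence, reduce the passage to level $m+1$ to a rational inequality in $\ell$ and $m$, and verify it, with a finite check for small $m$ as the base case. (Alternatively, one could estimate $d_\ell(m)^2-L_\ell\,d_{\ell-1}(m)d_{\ell+1}(m)$ directly from the sum \eqref{eq:B-M-seq}, but keeping everything rational in $\ell$ and $m$ through a recurrence is likely cleaner.)

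Finally, for (iii), one substitutes $L_\ell\le r_\ell\le U_\ell$ and $L_{\ell+1}\le r_{\ell+1}\le U_{\ell+1}$ into the Hou--Li inequalities; as these are monotone in the four endpoints, the problem collapses to a finite list of polynomial inequalities in the integers $m\ge 3$ and $1\le \ell\le m-2$, to be cleared of denominators and settled by elementary analysis of the resulting polynomials, together with a direct check for the first few $m$. I expect the real obstacle to be step (ii), and specifically the \emph{sharpness} demanded of $L_\ell$: throughout most of the range $1\le \ell\le m-2$ the ratio $r_\ell$ is only $1+\Theta(1/m)$, so for the four inequalities to close, the gap $U_\ell-L_\ell$ must be of order $O(1/m^2)$ --- a crude bound such as $L_\ell=\frac{\ell+1}{\ell}$, which is off by $\Theta(1/m)$, is nowhere near enough --- and hence the full $m$-dependence has to be carried through the inductive step. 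A secondary, purely bookkeeping, difficulty is that the sharper the bounds, the larger the finite base-case region one has to handle in (iii).
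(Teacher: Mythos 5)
Your proposal follows essentially the same route as the paper: reduce to the Hou--Li criterion, take the Chen--Gu upper bound $\frac{(\ell+1)(m-\ell+1)}{\ell(m-\ell)}$, prove a matching lower bound by induction on $m$ through the Kauers--Paule recurrences, and verify the four resulting polynomial inequalities. The lower bound the paper actually establishes, $\frac{(m-\ell+1)(\ell+1)(m+\ell^2)}{(m-\ell)\ell(m+\ell^2+1)}$, has exactly the shape and sharpness (gap $U_\ell/(m+\ell^2+1)$) that you predicted would be necessary, so your plan is on target.
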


The objective of this paper is to give a new proof of Theorem \ref{thm:ht-B-M-ell}. More precisely, we show a proof of a slightly sharp version of \eqref{eq:ht-B-M-ell}.
\begin{theorem}\label{thm:ht-B-M-ell-sharp}
For $m\geq 3$ and $1\leq \ell \leq m-2$, the higher order Tur\'{a}n inequalities in \eqref{eq:ht-B-M-ell} hold strictly.
\end{theorem}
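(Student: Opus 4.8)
The plan is to work throughout with the ratios $r_\ell(m):=d_\ell(m)^2/(d_{\ell-1}(m)d_{\ell+1}(m))$ and to feed sharp two-sided bounds on them into the sufficient condition of Hou and Li \cite{Hou-Li2021}. First I would record the standard reformulation of \eqref{eq:ht-B-M-ell}: since all $d_\ell(m)>0$, dividing by $d_{\ell-1}(m)d_\ell(m)d_{\ell+1}(m)d_{\ell+2}(m)$ and noting $r_\ell(m)r_{\ell+1}(m)=d_\ell(m)d_{\ell+1}(m)/(d_{\ell-1}(m)d_{\ell+2}(m))$, the higher order Tur\'an inequality \eqref{eq:ht-B-M-ell} is equivalent to
\[
 4\,\bigl(r_\ell(m)-1\bigr)\bigl(r_{\ell+1}(m)-1\bigr)\;\ge\;r_\ell(m)r_{\ell+1}(m)+\frac{1}{r_\ell(m)r_{\ell+1}(m)}-2 ,
\]
and, dividing once more by $r_\ell(m)r_{\ell+1}(m)$ and putting $u=1-1/r_\ell(m)$, $v=1-1/r_{\ell+1}(m)$, to
\[
 u\,v\,\bigl(2u+2v-uv\bigr)\;\ge\;(u-v)^2 .
\]
Once $r_\ell(m)>1$ is known (log-concavity, which will follow from the lower bound below) we have $u,v\in(0,1)$, and the Hou--Li criterion reduces Theorem~\ref{thm:ht-B-M-ell-sharp} to producing bounds $L_\ell(m)\le r_\ell(m)\le U_\ell(m)$ and verifying four explicit inequalities in $\ell,m$: writing $u^-=1-1/L_\ell(m)$, $u^+=1-1/U_\ell(m)$ and $v^\pm$ from $L_{\ell+1}(m),U_{\ell+1}(m)$ in the same way, these are $L_\ell(m)>1$, $2u^-+2v^--u^-v^->0$, and that the value $u^-v^-(2u^-+2v^--u^-v^-)$ of the left side at the lower-left corner of the rectangle $[u^-,u^+]\times[v^-,v^+]$ dominates each of $(u^+-v^-)^2$ and $(v^+-u^-)^2$. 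Since $uv(2u+2v-uv)$ is coordinatewise increasing on $(0,1)^2$ and $(u-v)^2$ over a rectangle is maximized at one of its two opposite corners, these four inequalities force the displayed one on the whole rectangle, hence at the true point $(r_\ell(m),r_{\ell+1}(m))$.

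For the upper bound I would take the reverse ultra-log-concavity estimate of Chen and Gu \cite{Chen-Gu2009}, namely $U_\ell(m)=\frac{(\ell+1)(m-\ell+1)}{\ell(m-\ell)}=(1+\frac1\ell)(1+\frac1{m-\ell})$. The core of the argument is then the matching lower bound, and this is the delicate point: near $r\approx1$ the displayed inequality behaves like $(s+t)(s^2+t^2)-(s-t)^2$ with $s=r_\ell(m)-1$, $t=r_{\ell+1}(m)-1$, so the cheap bound $r_\ell(m)\ge1+\frac1\ell$ — which is exactly the log-concavity of $\{\ell!\,d_\ell(m)\}$ — is \emph{not} sharp enough in the balanced range $\ell\approx m/2$, where the sandwich $[L_\ell(m),U_\ell(m)]$ must have width $O(1/m^2)$; that is, $L_\ell(m)$ must track $U_\ell(m)$ up to a lower-order correction. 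I would therefore aim to prove a bound of the shape
\[
 d_\ell(m)^2\;\ge\;\frac{(\ell+1)(m-\ell+1)-c_\ell(m)}{\ell(m-\ell)}\;d_{\ell-1}(m)d_{\ell+1}(m)
\]
with $c_\ell(m)$ of lower order (already $c_\ell(m)\equiv1$ would do, and still yields $r_\ell(m)\ge1+\frac1\ell$ for all $m\ge2$, hence the log-concavity of $\{\ell!\,d_\ell(m)\}$). To establish it I would expand both $d_\ell(m)^2$ and $d_{\ell-1}(m)d_{\ell+1}(m)$ via the closed form \eqref{eq:B-M-seq} — or via the recurrences for $d_\ell(m)$ used in \cite{Boros-Moll-2004, Kauers-Paule, Chen-Xia} — collect the difference of the two sides as one sum over the summation indices, and exhibit it as a sum of manifestly nonnegative terms after a suitable reindexing, in the same spirit as the known proofs of log-concavity and reverse ultra-log-concavity but sharpened to this target.

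With both bounds in hand the remaining step is routine. Here $u^\pm$ and $v^\pm$ are all $\asymp\frac{m}{\ell(m-\ell)}$ with $u^+-u^-=O\bigl(\frac{1}{\ell(m-\ell)}\bigr)$, so the left side of the corner inequalities is $\asymp\bigl(\frac{m}{\ell(m-\ell)}\bigr)^3$, while each squared gap contributes at most $O\bigl(\frac{1}{\ell^2(m-\ell)^2}\bigr)$ together with the $\ell\mapsto\ell+1$ variation $\bigl(\frac{1}{\ell^2}-\frac{1}{(m-\ell)^2}\bigr)^2$; a short case split according to whether $\ell$ and $m-\ell$ are small or large then closes the four inequalities with room to spare, and the finitely many small-$m$ instances are checked by hand. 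I expect the genuine obstacle to be the lower bound above: finding a bound on $r_\ell(m)$ that is at once provable from the combinatorial formula and tight enough — gap $O(1/m^2)$ from the Chen--Gu bound — to survive the Hou--Li verification in the balanced regime; everything past that point is bookkeeping.
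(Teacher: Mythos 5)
Your overall strategy is exactly the one the paper follows: reduce \eqref{eq:ht-B-M-ell} to the Hou--Li corner conditions for the quantity $d(x,y)=4(1-x)(1-y)-(1-xy)^2$ evaluated on a rectangle of bounds for $d_{\ell-1}(m)d_{\ell+1}(m)/d_\ell(m)^2$, take the Chen--Gu reverse ultra log-concavity bound on one side, and supply a new, much sharper bound on the other side whose gap from Chen--Gu is $O(1/m^2)$ in the balanced regime. Your reformulation in the variables $u=1-1/r_\ell(m)$, $v=1-1/r_{\ell+1}(m)$ is algebraically correct and your monotonicity/corner reduction is a valid repackaging of the Hou--Li criterion. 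You have also correctly diagnosed where the entire difficulty sits.

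The problem is that you have not crossed that difficulty: the sharp lower bound is exactly the content you leave unproved. You propose $r_\ell(m)\ge\bigl((\ell+1)(m-\ell+1)-c_\ell(m)\bigr)/\bigl(\ell(m-\ell)\bigr)$ and suggest establishing it by expanding $d_\ell(m)^2-(\cdots)\,d_{\ell-1}(m)d_{\ell+1}(m)$ from the closed form \eqref{eq:B-M-seq} and ``exhibiting it as a sum of manifestly nonnegative terms after a suitable reindexing.'' No such decomposition is exhibited, and none of the known proofs of log-concavity or reverse ultra log-concavity for $d_\ell(m)$ proceed this way at this level of precision; Kauers--Paule already needed computer algebra for plain log-concavity. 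The paper instead gets its lower bound
\[
\frac{d_\ell(m)^2}{d_{\ell-1}(m)d_{\ell+1}(m)}>\frac{(m-\ell+1)(\ell+1)(m+\ell^2)}{(m-\ell)\ell(m+\ell^2+1)}
\]
by an entirely different mechanism: using the contiguous recurrences \eqref{eq:reclm1}--\eqref{eq:reclm2} it rewrites the target as a quadratic inequality in the single ratio $d_\ell(m+1)/d_\ell(m)$, computes the larger root $L(m,\ell)$ (an explicit expression involving a square root), and proves $d_\ell(m+1)/d_\ell(m)>L(m,\ell)$ by induction on $m$ via the three-term recurrence \eqref{eq:rec-dlm-m}, the induction step resting on a separate algebraic lemma. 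That chain of reductions is the substance of the proof, and your proposal contains no substitute for it. In addition, your specific candidate $c_\ell(m)\equiv1$ is asserted, not proved, to be a valid lower bound (it is in places stronger than even the paper's sharpened Theorem 3.5, e.g.\ at $\ell=1$ for large $m$), and the final four corner inequalities are only sketched asymptotically, whereas they must be verified as exact polynomial positivity statements in $(\ell,m)$ for all $m\ge3$, $1\le\ell\le m-2$, not just for large $m$ plus ``finitely many small-$m$ instances.'' As it stands, the proposal is a correct road map with the two load-bearing steps missing.
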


We shall prove Theorem \ref{thm:ht-B-M-ell-sharp} by using a sufficient condition on the higher order Tur\'{a}n inequalities given by Hou and Li \cite{Hou-Li2021}. To apply this criterion, we need only to check four simple inequalities associated with a set of sufficiently sharp bounds for $d_\ell(m)^2/(d_{\ell-1}(m)d_{\ell+1}(m))$. Notice that Hou and Li provided an algorithm to find the bounds which is very efficient for sequences with one parameter. But, their algorithm does not work for this case since $d_\ell(m)$ has two parameters $\ell$ and $m$.
So, we establish a desired lower bound by hand in Sections \ref{Sec:2} and \ref{Sec:3}, and adopt the upper bound obtained by Chen and Gu \cite{Chen-Gu2009}. Then we complete the proof of Theorem \ref{thm:ht-B-M-ell-sharp} in Section \ref{Sec:ht-ell}.

\section{A sharper lower bound for $d_\ell(m+1)/d_\ell(m)$}\label{Sec:2}

The aim of this section is to prove a sharp enough lower bound for $d_\ell(m+1)/d_\ell(m)$, so that it will lead to a sufficiently sharp lower bound for $d_\ell(m)^2/(d_{\ell-1}(m)d_{\ell+1}(m))$, which satisfies the requirement of our proof of Theorem \ref{thm:ht-B-M-ell-sharp}.
For $m\geq 1$ and $0\leq \ell \leq m$, set
\begin{align}\label{eq:def-Lml}
L(m,\ell)=\frac{4m^2+7m-2\ell^2+3}{2(m+1)(m-\ell+1)}
  +\frac{\ell\sqrt{4\ell^4+8\ell^2m+5\ell^2+m}}{2(m+1)(m-\ell+1)\sqrt{m+\ell^2}}.
\end{align}

The main result of this section is as follows.
\begin{theorem}\label{thm:lb-ratdlm1m}
Let $L(m,\ell)$ be given in \eqref{eq:def-Lml}.
For any $m\geq 2$ and $1\leq \ell \leq m-1$,we have
\begin{align}\label{ineq:lb-dlm1m}
\frac{d_\ell(m+1)}{d_\ell(m)}>L(m,\ell),
\end{align}
and for $m\geq 1$, we have
\begin{align}\label{eq:dlmlb0m}
\frac{d_0(m+1)}{d_0(m)}=L(m,0),\qquad 
\frac{d_m(m+1)}{d_m(m)}=L(m,m). 
\end{align}
\end{theorem}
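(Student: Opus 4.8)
The plan is to reduce the theorem to a single algebraic inequality by combining a Boros--Moll recurrence with an induction on $m$. First I would record the recurrence
\[
d_\ell(m+1)=\frac{4m+2\ell+3}{2(m+1)}\,d_\ell(m)+\frac{m+\ell}{m+1}\,d_{\ell-1}(m),\qquad 0\le \ell\le m+1,
\]
with the convention $d_{-1}(m)=0$; it can be derived from the integral representation (it is equivalent to the differential--difference identity $4(m+1)P_{m+1}(x)=\bigl(4(m+1)x+2(4m+3)\bigr)P_m(x)+4x(x+1)P_m'(x)$). Dividing by $d_\ell(m)$ and writing $r_\ell(m):=d_\ell(m+1)/d_\ell(m)$ gives
\[
r_\ell(m)=\frac{4m+2\ell+3}{2(m+1)}+\frac{m+\ell}{m+1}\cdot\frac{d_{\ell-1}(m)}{d_\ell(m)} .
\]
For $\ell=0$ the last term drops out and $r_0(m)=(4m+3)/(2(m+1))=L(m,0)$; for $\ell=m$ the formula \eqref{eq:B-M-seq} gives the closed form $d_{m-1}(m)/d_m(m)=(2m+1)/2$, so $r_m(m)=(2m+1)(2m+3)/(2(m+1))=L(m,m)$. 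This proves \eqref{eq:dlmlb0m} and, at the same time, supplies for every $m$ the two end values of the row that the induction below will need.

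For the interior range I would pass to a recurrence for the ratio itself. Applying the recurrence above at the indices $\ell$ and $\ell-1$ of row $m-1$ — first to rewrite $d_\ell(m)$ and $d_{\ell-1}(m)$ in terms of row $m-1$, then to re-express the intra-row ratios of row $m-1$ that appear — the quotient $d_{\ell-1}(m)/d_\ell(m)$ collapses to an expression in $r_{\ell-1}(m-1)$ and $r_\ell(m-1)$ only, and one is left with
\[
r_\ell(m)=\frac{4m+2\ell+3}{2(m+1)}+\frac{(m+\ell)\,r_{\ell-1}(m-1)}{2(m+1)(m+\ell-1)}\left(2m-\frac{4m+2\ell-1}{r_\ell(m-1)}\right).
\]
Positivity of the coefficients forces $r_\ell(m-1)>(4m+2\ell-1)/(2m)$, so the inner bracket is positive and the right-hand side is strictly increasing in each of $r_{\ell-1}(m-1)$, $r_\ell(m-1)$. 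I would then induct on $m$: the base case $m=2$ has only $\ell=1$ and is checked by hand ($r_1(2)=\tfrac{43}{15}>\tfrac{17}{6}=L(2,1)$); and for $m\ge 3$, feeding in the inductive hypotheses $r_{\ell-1}(m-1)\ge L(m-1,\ell-1)$ and $r_\ell(m-1)\ge L(m-1,\ell)$ (the two end cases of row $m-1$ being the equalities just established) yields
\[
r_\ell(m)\ \ge\ \frac{4m+2\ell+3}{2(m+1)}+\frac{(m+\ell)\,L(m-1,\ell-1)}{2(m+1)(m+\ell-1)}\left(2m-\frac{4m+2\ell-1}{L(m-1,\ell)}\right)=:\Lambda(m,\ell),
\]
and for $1\le\ell\le m-1$, $m\ge 3$, at least one of the two substituted estimates is strict (namely $r_\ell(m-1)>L(m-1,\ell)$ if $\ell\le m-2$, and $r_{\ell-1}(m-1)>L(m-1,\ell-1)$ if $\ell\ge 2$), so $r_\ell(m)>\Lambda(m,\ell)$.

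It then remains to verify $\Lambda(m,\ell)\ge L(m,\ell)$ for all $m\ge 2$, $1\le\ell\le m-1$, and this is where the main work — and the main obstacle — lies. Substituting \eqref{eq:def-Lml} with the three argument shifts $(m-1,\ell-1)$, $(m-1,\ell)$, $(m,\ell)$ and clearing the rational factors turns this into an inequality among the radicals $\sqrt{m-1+(\ell-1)^2}$, $\sqrt{m-1+\ell^2}$, $\sqrt{m+\ell^2}$ and their three companion quartic radicals. I would clear these in stages, each time isolating one radical (or a product of two), checking the sign of the side to be squared, and squaring, until a polynomial inequality in $m$ and $\ell$ survives; one then certifies that polynomial to be nonnegative on $\{m\ge 2,\ 1\le\ell\le m-1\}$, e.g. by exhibiting it as a combination of $\ell-1\ge 0$, $m-\ell-1\ge 0$ and $m-2\ge 0$ with nonnegative polynomial coefficients. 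Because $L(m,\ell)$ is calibrated so that the induction only barely closes, the polynomial remaining after the radicals are eliminated will be bulky and the sign bookkeeping during the repeated squarings is delicate; producing the final positivity certificate is the real content of the argument.
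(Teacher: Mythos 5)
Your reduction is internally consistent: the recurrence for $r_\ell(m)$ in terms of $r_{\ell-1}(m-1)$ and $r_\ell(m-1)$ does follow from $2(m+1)d_\ell(m+1)=2(m+\ell)d_{\ell-1}(m)+(4m+2\ell+3)d_\ell(m)$ applied at rows $m$ and $m-1$; the two boundary equalities $r_0(m)=L(m,0)$ and $r_m(m)=L(m,m)$ are correct; and the monotonicity argument that lets you substitute the inductive lower bounds is sound (the positivity of the bracket at the true value of $r_\ell(m-1)$ is all you need, as you note). But the proof is not complete: everything hinges on the inequality $\Lambda(m,\ell)\ge L(m,\ell)$, which you describe as an exercise in clearing radicals and certifying a polynomial without carrying out a single squaring or offering any evidence, even numerical, that it holds. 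That inequality is the entire content of the theorem: $L(m,\ell)$ is an exactly calibrated quantity (it is a root of the quadratic that appears in Section \ref{Sec:3}), and there is no a priori reason it should be reproduced by your particular induction scheme, which combines $L$ at the three shifted arguments $(m-1,\ell-1)$, $(m-1,\ell)$, $(m,\ell)$ nonlinearly. Until that step is verified the argument is a plan rather than a proof, and your version is harder to verify than necessary because it involves three independent radical pairs rather than two, hence more squarings, each requiring a sign check that could fail.

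For comparison, the paper avoids the two-variable induction entirely: it fixes $\ell$ and inducts on $m$ alone using the three-term recurrence \eqref{eq:rec-dlm-m}, so the induction step needs only the single hypothesis $d_\ell(m+1)/d_\ell(m)>L(m,\ell)$ and reduces to one inequality between $L(m,\ell)$ and an explicit quantity $R(m,\ell)$ (Lemma \ref{lem:L>A2/R}), which the paper verifies to the end by exhibiting, after two squarings with sign checks, a polynomial $C_6$ with all positive coefficients. The endpoint $\ell=m$ of each new row is then handled by a short closed-form computation; your equalities \eqref{eq:dlmlb0m} would give that endpoint for free, which is the one genuine advantage of your scheme. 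To complete your route you must actually produce the positivity certificate for $\Lambda(m,\ell)-L(m,\ell)$ on $m\ge 3$, $1\le\ell\le m-1$; absent that, the single-variable induction of the paper is the cleaner path.
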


Kauers and Paule \cite{Kauers-Paule} used a computer algebra system to build the following recurrence relations for $d_\ell(m)$, which will be adopted in our proofs.
\begin{theorem}[Kauers-Paule]\label{Thm:rec-dlm-m}
For $m\geq 1$ and $0\leq \ell \leq m+1$, there holds
\begin{align}
&\ 4(m+1)(m+2)(m+2-\ell)d_\ell(m+2)\label{eq:rec-dlm-m}\\
=&\ 2(m+1)(8m^2+24m-4\ell^2+19)d_\ell(m+1)
  -(4m+3)(4m+5)(m+\ell+1)d_\ell(m).\nonumber
\end{align}\vskip -23pt
\begin{align}
2(m+1)d_\ell(m+1)=&\ 2(m+\ell)d_{\ell-1}(m)+(4m+2\ell+3)d_\ell(m),\label{eq:reclm1}\\
2(m+1)(m+1-\ell)d_\ell(m+1)=&\ (4m-2\ell+3)(m+\ell+1)d_{\ell}(m)\label{eq:reclm2}\\
&\ -2\ell(\ell+1)d_{\ell+1}(m).\nonumber
\end{align}
\end{theorem}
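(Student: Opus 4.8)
The only ingredient available is the explicit formula \eqref{eq:B-M-seq}. Writing the summand as
\[
F(m,\ell,k)=2^{k-2m}\binom{2m-2k}{m-k}\binom{m+k}{k}\binom{k}{\ell},
\]
so that $d_\ell(m)=\sum_k F(m,\ell,k)$ with $F$ vanishing outside $\ell\le k\le m$, each of the three identities asserts that a fixed linear combination (with coefficients polynomial in $m,\ell$) of shifted sums is zero. Since $F$ is a proper hypergeometric term, my plan is to prove all three by creative telescoping (Zeilberger's algorithm / the WZ method): for each recurrence I would exhibit a rational certificate $R(m,\ell,k)$ for which the relevant combination of shifted summands equals the forward $k$-difference $R(m,\ell,k+1)F(\,\cdot\,,k+1)-R(m,\ell,k)F(\,\cdot\,,k)$, and then sum over $k$.

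The first step is to record the summand ratios, all explicit rational functions of $m,\ell,k$ obtained by cancelling factorials:
\[
\frac{F(m+1,\ell,k)}{F(m,\ell,k)}=\frac{(2m-2k+1)(m+k+1)}{2(m+1)(m-k+1)},\qquad
\frac{F(m,\ell-1,k)}{F(m,\ell,k)}=\frac{\ell}{k-\ell+1},
\]
\[
\frac{F(m,\ell+1,k)}{F(m,\ell,k)}=\frac{k-\ell}{\ell+1},\qquad
\frac{F(m,\ell,k+1)}{F(m,\ell,k)}=\frac{(m-k)(m+k+1)}{(2m-2k-1)(k-\ell+1)}.
\]
For the mixed recurrences \eqref{eq:reclm1} and \eqref{eq:reclm2}, I would divide the target telescoping identity through by $F(m,\ell,k)$, substitute these ratios, and thereby reduce the search for $R$ to Gosper's equation; the algorithm returns a rational $R$ whose denominator matches the poles at $m-k+1$, $2m-2k-1$ and $k-\ell+1$. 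Verification is then purely mechanical: after clearing denominators the certificate identity becomes a polynomial identity in $m,\ell,k$, to be confirmed by expansion. Summing over all integers $k$ telescopes the right-hand side to $0$, because $R\cdot F$ inherits the finite support of $F$ (the binomial factors force $F=0$ for $k<\ell$ and for $k>m$); what remains is exactly \eqref{eq:reclm1} and \eqref{eq:reclm2}.

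For the pure second-order recurrence \eqref{eq:rec-dlm-m} in $m$ (with $\ell$ held fixed) the same machinery applies, now using the $m$-shift ratio above together with its $m\mapsto m+1$ iterate to express $F(m+2,\ell,k)$ and $F(m+1,\ell,k)$ as rational multiples of $F(m,\ell,k)$; Zeilberger's algorithm produces a two-step certificate and the verification is again a single polynomial identity followed by the telescoping argument. Alternatively, \eqref{eq:rec-dlm-m} can in principle be obtained by uncoupling \eqref{eq:reclm1} and \eqref{eq:reclm2}: using \eqref{eq:reclm1} to eliminate $d_{\ell-1}(m)$ and \eqref{eq:reclm2} to eliminate $d_{\ell+1}(m)$ in favour of $d_\ell(m)$ and $d_\ell(m+1)$, and invoking these relations at the shift $m\mapsto m+1$ as well, one removes the $\ell$-neighbours and isolates a relation among $d_\ell(m),d_\ell(m+1),d_\ell(m+2)$—though the direct telescoping in $m$ is cleaner. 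I expect the main obstacle to be precisely this bookkeeping: producing and recording the certificates (which the algorithm supplies automatically but which a self-contained proof must exhibit) and, for \eqref{eq:rec-dlm-m}, carrying out the elimination without sign or index slips. Once the certificates are in hand, every remaining step is a finite polynomial verification.
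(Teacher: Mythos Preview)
The paper does not give its own proof of this statement: it is quoted as a result of Kauers and Paule, with the remark that Moll independently obtained \eqref{eq:rec-dlm-m} via the WZ-method. Your proposal---creative telescoping/Zeilberger on the explicit hypergeometric summand \eqref{eq:B-M-seq}, with the summand ratios you wrote down (which are correct)---is exactly that WZ approach, so it is in line with the cited literature and there is nothing further to compare.
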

Note that Moll \cite{Moll2007} also independently found the relation \eqref{eq:rec-dlm-m} via the WZ-method \cite{PWZ, WZ}.
Based on these bounds, Kauers and Paule \cite{Kauers-Paule} obtained the following bound for $d_\ell(m+1)/d_\ell(m)$, that is,
\begin{align}\label{ineq:KP-lb}
\frac{d_\ell(m+1)}{d_\ell(m)}
\geq\frac{4m^2+7m+\ell+3}{2(m+1)(m-\ell+1)},\quad 0\leq \ell \leq m.
\end{align}
To derive the strict ratio monotonicity of $P_m(x)$, Chen and Xia \cite{Chen-Xia} showed a slightly sharp version of \eqref{ineq:KP-lb}, that is,
\begin{align}\label{ineq:KP-lb-sharp}
\frac{d_\ell(m+1)}{d_\ell(m)}
>\frac{4m^2+7m+\ell+3}{2(m+1)(m-\ell+1)},\quad 1\leq \ell \leq m-1.
\end{align}

In order to establish a sufficiently sharp lower bound for $d_\ell(m)^2/(d_{\ell-1}(m)d_{\ell+1}(m))$, we need the sharp lower bound for $d_\ell(m+1)/d_\ell(m)$ given by Theorem \ref{thm:lb-ratdlm1m}.
To prove this result, we need the following inequality.
\begin{lemma}\label{lem:L>A2/R}
For $m\geq 2$ and $1\leq \ell \leq m-1$, we have
\begin{align}\label{ineq:L>A2/R}
L(m,\ell)
>\frac{(4m+3)(4m+5)(m+\ell+1)}{4(m+1)(m+2)(m-\ell+2)R(m,\ell)},
\end{align}
where
\begin{align*}
R(m,\ell)
=&\ \frac{4m^2+9m-2\ell^2+5}{2(m+2)(m-\ell+2)}
 -\frac{\ell\sqrt{4\ell^4+8\ell^2m+13\ell^2+m+1}}{2(m+2)(m-\ell+2)\sqrt{m+\ell^2+1}}.
\end{align*}
\end{lemma}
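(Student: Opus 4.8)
The plan is to reduce \eqref{ineq:L>A2/R} to a polynomial inequality in $m$ and $\ell$ and then verify that inequality. If $R(m,\ell)\le 0$ there is nothing to prove, since the left-hand side of \eqref{ineq:L>A2/R} is positive while its right-hand side is then non-positive; and $R(m,\ell)>0$ can be checked directly, being equivalent to the polynomial estimate $(4m^2+9m-2\ell^2+5)^2(m+\ell^2+1)>\ell^2(4\ell^4+8\ell^2m+13\ell^2+m+1)$. So assume $R(m,\ell)>0$. Writing
$$2(m+1)(m-\ell+1)L(m,\ell)=a_1+\ell\sqrt{P/Q},\qquad 2(m+2)(m-\ell+2)R(m,\ell)=a_2-\ell\sqrt{S/T}$$
with $a_1=4m^2+7m-2\ell^2+3$, $a_2=4m^2+9m-2\ell^2+5$, $P=4\ell^4+8\ell^2m+5\ell^2+m$, $Q=m+\ell^2$, $S=4\ell^4+8\ell^2m+13\ell^2+m+1$, $T=m+\ell^2+1$, clearing denominators in \eqref{ineq:L>A2/R} shows it is equivalent to
$$\bigl(a_1+\ell\sqrt{P/Q}\bigr)\bigl(a_2-\ell\sqrt{S/T}\bigr)>(4m+3)(4m+5)\bigl((m+1)^2-\ell^2\bigr).$$

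Next I would simplify the radicals. A short computation gives
$$\frac{P}{Q}=(2\ell+1)^2+\frac{4\ell(\ell-1)(m-\ell)}{m+\ell^2},\qquad \frac{S}{T}=(2\ell+1)^2+\frac{4\ell(\ell-1)(m-\ell+1)}{m+\ell^2+1},$$
so both radicands lie in $[(2\ell+1)^2,\infty)$, with equality when $\ell=1$. Using the elementary estimate $\frac{2cr}{c^2+r}\le\sqrt{r}\le\frac{c^2+r}{2c}$ (each being $(c-\sqrt r)^2\ge 0$) with $c=2\ell+1$, I replace $\sqrt{P/Q}$ by the rational lower bound $\frac{2(2\ell+1)P}{(2\ell+1)^2Q+P}$ and $\sqrt{S/T}$ by the rational upper bound $\frac{(2\ell+1)^2T+S}{2(2\ell+1)T}$, both of which are exact at $\ell=1$. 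Substituting these into the displayed inequality and clearing the (positive) denominators turns it into a polynomial inequality $\Psi(m,\ell)>0$, to be established for $m\ge 2$ and $1\le\ell\le m-1$.

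The final step is where the work lies, for two linked reasons. First, the radical estimates have to be sharp enough: the coarser bound $\sqrt{P/Q}>2\ell+1$ would merely replace $L(m,\ell)$ by the Kauers--Paule lower bound $\frac{4m^2+7m+\ell+3}{2(m+1)(m-\ell+1)}$ for $d_\ell(m+1)/d_\ell(m)$, and one checks that this is already insufficient---the corresponding polynomial becomes negative for $\ell$ close to $m-1$ and $m$ large---so the first-order correction carried by $\frac{2cr}{c^2+r}$ is indispensable. Second, even with the sharper bounds $\Psi(m,\ell)>0$ is genuinely tight: a leading-order analysis along $\ell=m-1$ shows that the two sides of \eqref{ineq:L>A2/R} agree to top order in $m$ with surplus only $\Theta(m)$, so $\Psi$ cannot be disposed of by a naive inspection of its coefficients. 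For the verification I would set $\ell=1+s$ and $m=\ell+1+t=s+t+2$ with $s,t\ge 0$ and check that $\Psi$, re-expressed in $s$ and $t$, has only non-negative coefficients---equivalently, exhibit it as a non-negative combination of monomials in $s$, $t$ and the manifestly positive quantities $m+1$, $m+\ell^2$, $m+\ell^2+1$. Organizing this expansion so that it stays tractable, in particular using the identities above to prevent the factors $(2\ell+1)^2Q+P$ and $(2\ell+1)^2T+S$ from blowing up the degree, is the main bookkeeping hurdle.
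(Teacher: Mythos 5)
Your reduction is sound and coincides with the paper's opening steps: you dispose of the case $R(m,\ell)\le 0$, clear denominators to reach $\bigl(a_1+\ell\sqrt{P/Q}\bigr)\bigl(a_2-\ell\sqrt{S/T}\bigr)>(4m+3)(4m+5)\bigl((m+1)^2-\ell^2\bigr)$, and your identities for $P/Q$ and $S/T$ check out. But the proof fails at the step you defer, and not merely because the verification of $\Psi(m,\ell)>0$ is left undone: the inequality your Pad\'e substitutions produce is \emph{false}. Take $m=40$, $\ell=2$, so $a_1=6675$, $a_2=6757$, $P/Q=351/11$, $S/T=479/15$, and the right-hand side equals $163\cdot165\cdot1677=45102915$. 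The true product is about $6686.298\times6745.698\approx 45103745$, a surplus of roughly $830$; after replacing $\sqrt{351/11}\approx 5.64881$ by $14040/2504\approx 5.60703$ and $\sqrt{479/15}\approx 5.65096$ by $2562/450\approx 5.69333$, the product drops to about $6686.214\times6745.613\approx 45102615$, i.e.\ about $300$ \emph{below} the right-hand side. The failure is structural, not an isolated accident: for $\ell\sim\alpha m$ the two sides agree through order $m^3$ and the true surplus is only $2(1-\alpha^2)m^2+O(m)$, while each Pad\'e substitution perturbs its factor by $\Theta(1)$ and hence the product by $\frac{(1-\alpha)^2(2-\alpha^2)}{\alpha^2}m^2+O(m)$; the ratio of loss to surplus tends to $\frac{(1-\alpha)(2-\alpha^2)}{2\alpha^2(1+\alpha)}$, which equals $7/6>1$ already at $\alpha=1/2$. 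For fixed $\ell\ge 2$ and $m\to\infty$ it is worse still: the surplus is only $\Theta(m)$ while the loss remains $\Theta(m^2)$. So no change of variables or coefficient bookkeeping can rescue $\Psi>0$.

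The remedy is to eliminate the radicals losslessly rather than by one-sided rational approximation, which is exactly what the paper does: rewrite the target as $C_1U+C_3>C_2V+\ell UV$ with $U=\sqrt{QP}$, $V=\sqrt{TS}$ and explicit positive $C_1,C_2,C_3$, square once (both sides being positive) to obtain $C_4-C_5U>0$ with $C_4,C_5>0$, and square again to land on a polynomial $C_6$ whose coefficients are visibly positive. Each squaring is an equivalence given the sign checks, so none of the razor-thin margin is sacrificed.
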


\begin{proof}
Fix $m\geq 2$.
We first show that $R(m,\ell)>0$ for $1\leq \ell \leq m-1$. Observe that
\begin{align*}
R(m,\ell)
=&\ \frac{(4m^2+9m-2\ell^2+5)\sqrt{m+\ell^2+1}
  -\ell\sqrt{4\ell^4+8\ell^2m+13\ell^2+m+1}}
  {2(m+2)(m-\ell+2)\sqrt{m+\ell^2+1}}.
\end{align*}
It is easy to check that
\begin{align*}
&\ (4m^2+9m-2\ell^2+5)^2(m+\ell^2+1)-\ell^2(4\ell^4+8\ell^2m+13\ell^2+m+1)\\
=&\ (m+\ell+1)(m-\ell+1)(16\ell^2m^2+40\ell^2m+16m^3+29\ell^2+56m^2+65m+25)>0,
\end{align*}
for $1\leq \ell \leq m-1$, which leads to the fact that $R(m,\ell)>0$ for $1\leq \ell \leq m-1$. Therefore, \eqref{ineq:L>A2/R} is equivalent to
\begin{align}\label{ineq:L>A2/Re}
L(m,\ell)R(m,\ell)
>\frac{(4m+3)(4m+5)(m+\ell+1)}{4(m+1)(m+2)(m-\ell+2)}.
\end{align}

Denote by
\begin{align*}
&U=\sqrt{(m+\ell^2)(4\ell^4+8\ell^2m+5\ell^2+m)},\\
&V=\sqrt{(m+\ell^2+1)(4\ell^4+8\ell^2m+13\ell^2+m+1)}.
\end{align*}
It is easy to verify that
\begin{align*}
&\ L(m,\ell)R(m,\ell)-\frac{(4m+3)(4m+5)(m+\ell+1)}{4(m+1)(m+2)(m-\ell+2)}\\
=&\ \frac{\ell(C_1 U-C_2 V-\ell UV+C_3)}
{4(m+1)(m-\ell+1)(m+\ell^2)(m+2)(m-\ell+2)(m+\ell^2+1)},
\end{align*}
where
\begin{align*}
C_1=&\ (m+\ell^2+1)(4m^2+9m-2\ell^2+5),\\
C_2=&\ (m+\ell^2)(4m^2+7m-2\ell^2+3),\\
C_3=&\ \ell(4\ell^2-1)(m+\ell^2)(m+\ell^2+1).
\end{align*}

It is sufficient to prove
\begin{align}\label{ineq:C1PQ}
C_1 U+C_3>C_2 V+\ell UV, \qquad 1\leq \ell \leq m-1.
\end{align}
Clearly, $C_1,C_2,C_3>0$, and hence $C_1 U+C_3>0,C_2 V+\ell UV>0$ for $1\leq \ell \leq m-1$. So, \eqref{ineq:C1PQ} holds if and only if $(C_1 U+C_3)^2>(C_2 V+\ell UV)^2$ holds. Direct computation gives that
$$
(C_1 U+C_3)^2-(C_2 V+\ell UV)^2=C_4-C_5 U,
$$
where
\begin{align*}
C_4
 =&\ 4(m+\ell^2)(m+\ell^2+1)(m+\ell+1)(m-\ell+1)
 (16\ell^6m+28\ell^6+32\ell^4m^2+76\ell^4m\\
 &\ +32\ell^2m^3+33\ell^4+104\ell^2m^2+102\ell^2m+4m^3+29\ell^2+8m^2+4m),\\
C_5=&\ 8\ell(4\ell^2m+7\ell^2+2m+2)(m+\ell+1)(m-\ell+1)(m+\ell^2)(m+\ell^2+1).
\end{align*}
Moreover, we have
\begin{align*}
C_4^2-C_5^2 U^2=&\ 16(m+\ell^2)^2(m+\ell^2+1)^2(m+\ell+1)^2(m-\ell+1)^2 C_6,
\end{align*}
where
\begin{align*}
C_6
=&\ 256\ell^{10}m^3+960\ell^{10}m^2+1536\ell^8m^4+1136\ell^{10}m
   +7040\ell^8m^3+2048\ell^6m^5+420\ell^{10}\\
&\ +11568\ell^8m^2+11072\ell^6m^4+1024\ell^4m^6+8128\ell^8m
   +22720\ell^6m^3+6912\ell^4m^5\\
&\ +2089\ell^8+22188\ell^6m^2+18272\ell^4m^4+256\ell^2m^6+10340\ell^6m
   +24280\ell^4m^3\\
&\ +1344\ell^2m^5+1834\ell^6+17140\ell^4m^2+2720\ell^2m^4+16m^6+6084\ell^4m
   +2664\ell^2m^3\\
&\ +64m^5+841\ell^4+1264\ell^2m^2+96m^4+232\ell^2m+64m^3+16m^2.
\end{align*}
For $1\leq \ell \leq m-1$, it is clear that $C_4,C_5,C_6>0$. Then $C_4^2-C_5^2 U^2>0$ yields that $C_4-C_5 U>0$, which leads to \eqref{ineq:C1PQ}.
This completes the proof.
\end{proof}

We are now in a position to prove Theorem \ref{thm:lb-ratdlm1m}.

\begin{proof}[Proof of Theorem \ref{thm:lb-ratdlm1m}]
It is easy to verify that \eqref{eq:dlmlb0m} is right. We proceed to prove \eqref{ineq:lb-dlm1m} by mathematical induction on $m$. For $m=2$ and $\ell=1$, we have $d_1(3)/d_1(2)=43/15>17/6=L(2,1)$. Assume that \eqref{ineq:lb-dlm1m} is true, that is, for $1\leq \ell \leq m-1$,
\begin{align}\label{ineq:lb-dlm1mL}
d_\ell(m+1)>L(m,\ell)d_\ell(m).
\end{align}
It suffices to show that for $1\leq \ell \leq m$,
\begin{align}\label{ineq:lb-dlm2m}
d_\ell(m+2)>L(m+1,\ell)d_\ell(m+1).
\end{align}

We first prove \eqref{ineq:lb-dlm2m} for $1\leq \ell \leq m-1$.
By the recurrence relation \eqref{eq:rec-dlm-m}, we have
\begin{align}\label{eq:rec-dlm-m2}
d_\ell(m+2)
=&\ \frac{8m^2+24m-4\ell^2+19}{2(m+2)(m-\ell+2)}d_\ell(m+1) -\frac{(4m+3)(4m+5)(m+\ell+1)}{4(m+1)(m+2)(m-\ell+2)}d_\ell(m).
\end{align}
Applying \eqref{eq:rec-dlm-m2}, the inequality \eqref{ineq:lb-dlm2m} can be restated in the following form:
\begin{align}\label{ineq:lb-dlm1mLr}
R(m,\ell)d_\ell(m+1)
 >\frac{(4m+3)(4m+5)(m+\ell+1)}{4(m+1)(m+2)(m-\ell+2)}d_\ell(m),
\end{align}
where $R(m,\ell)$ is defined in Lemma \ref{lem:L>A2/R}.
By the proof of Lemma \ref{lem:L>A2/R}, $R(m,\ell)>0$ for $1\leq \ell \leq m-1$. Hence, the relation \eqref{ineq:lb-dlm1mLr} is equivalent to the inequality
\begin{align*}
\frac{d_\ell(m+1)}{d_\ell(m)}
>\frac{(4m+3)(4m+5)(m+\ell+1)}{4(m+1)(m+2)(m-\ell+2)R(m,\ell)},
\end{align*}
which can be obtained by \eqref{ineq:lb-dlm1mL} and Lemma \ref{lem:L>A2/R}.

It remains to prove \eqref{ineq:lb-dlm2m} for $\ell=m$. That is,
\begin{align}\label{ineq:lb-dl=mm}
\frac{d_m(m+2)}{d_m(m+1)}>L(m+1,m).
\end{align}
A direct computation gives that
\begin{align*}
 \frac{d_m(m+2)}{d_m(m+1)}
=&\ \frac{(m+1)(4m^2+18m+21)}{2(m+2)(2m+3)},\\
 L(m+1,m)
=&\ \frac{2m^2+15m+14}{4(m+2)}
 +\frac{m\sqrt{4m^4+8m^3+13m^2+m+1}}{4(m+2)\sqrt{m^2+m+1}}.
\end{align*}
Notice that
\begin{align*}
 \frac{d_m(m+2)}{d_m(m+1)}-L(m+1,m)
=&\ \frac{m(4m^4+12m^3+17m^2-(2m+3)W+13m+5)}{4(2m+3)(m+2)(m^2+m+1)},
\end{align*}
where
$$
W=\sqrt{(m^2+m+1)(4m^4+8m^3+13m^2+m+1)}.
$$
It follows that \eqref{ineq:lb-dl=mm} holds if and only if
\begin{align*}
4m^4+12m^3+17m^2+13m+5>(2m+3)W.
\end{align*}
In view of $(4m^4+12m^3+17m^2+13m+5)^2-(2m+3)^2W^2=4(m^2+m+1)(4m^3+19m^2+21m+4)>0$ for $m\geq 2$, we arrive at \eqref{ineq:lb-dl=mm}.
This completes the proof.
\end{proof}

\section{A sharper lower bound for $d_\ell(m)^2/(d_{\ell-1}(m)d_{\ell+1}(m))$}
\label{Sec:3}

This section is devoted to establishing a sufficiently sharp lower bound for $d_\ell(m)^2/(d_{\ell-1}(m)d_{\ell+1}(m))$, which can be used in our proof of Theorem \ref{thm:ht-B-M-ell-sharp}. The desired lower bound is as follows.

\begin{theorem}\label{thm:sharper-bd}
For each $m\geq 2$ and $1\leq \ell \leq m-1$, we have
\begin{align}\label{eq:lb-mell1}
\frac{d_\ell(m)^2}{d_{\ell-1}(m)d_{\ell+1}(m)}
>\frac{(m-\ell+1)(\ell+1)(m+\ell^2)}{(m-\ell)\ell(m+\ell^2+1)}.
\end{align}
\end{theorem}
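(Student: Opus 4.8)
The plan is to reduce \eqref{eq:lb-mell1} to an inequality for the single ratio $r:=d_\ell(m+1)/d_\ell(m)$, for which the sharp lower bound of Theorem~\ref{thm:lb-ratdlm1m} is available. First I would use the two recurrences \eqref{eq:reclm1} and \eqref{eq:reclm2} to solve for $d_{\ell-1}(m)$ and $d_{\ell+1}(m)$ in terms of $d_\ell(m)$ and $d_\ell(m+1)$, which gives
\begin{align*}
\frac{d_{\ell-1}(m)}{d_\ell(m)}=\frac{2(m+1)r-(4m+2\ell+3)}{2(m+\ell)},\qquad
\frac{d_{\ell+1}(m)}{d_\ell(m)}=\frac{(4m-2\ell+3)(m+\ell+1)-2(m+1)(m-\ell+1)r}{2\ell(\ell+1)}.
\end{align*}
Writing $P(r)=2(m+1)r-(4m+2\ell+3)$ and $Q(r)=(4m-2\ell+3)(m+\ell+1)-2(m+1)(m-\ell+1)r$, this yields
\begin{align*}
\frac{d_\ell(m)^2}{d_{\ell-1}(m)d_{\ell+1}(m)}=\frac{4\ell(\ell+1)(m+\ell)}{P(r)Q(r)}.
\end{align*}
Since $d_{\ell-1}(m)>0$ and $d_{\ell+1}(m)>0$, we have $P(r)>0$ and $Q(r)>0$; hence, with
\begin{align*}
M:=\frac{4\ell^2(m+\ell)(m-\ell)(m+\ell^2+1)}{(m-\ell+1)(m+\ell^2)},
\end{align*}
inequality \eqref{eq:lb-mell1} is equivalent to $P(r)Q(r)<M$ (note that $4\ell(\ell+1)(m+\ell)/M$ is precisely the right-hand side of \eqref{eq:lb-mell1}).

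Next I would analyse the quadratic $P(r)Q(r)$ as a function of $r$. Its leading coefficient is $-4(m+1)^2(m-\ell+1)<0$, so it is a downward-opening parabola, and a short computation (using $(4m-2\ell+3)(m+\ell+1)+(4m+2\ell+3)(m-\ell+1)=2(4m^2+7m-2\ell^2+3)$) shows that its vertex is at
\begin{align*}
r^{*}=\frac{4m^2+7m-2\ell^2+3}{2(m+1)(m-\ell+1)},
\end{align*}
which is exactly the rational part of $L(m,\ell)$ in \eqref{eq:def-Lml}. Therefore $L(m,\ell)>r^{*}$ is immediate, and $P(r)Q(r)$ is strictly decreasing on $[L(m,\ell),\infty)$. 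The key step is then to prove
\begin{align*}
P\bigl(L(m,\ell)\bigr)\,Q\bigl(L(m,\ell)\bigr)=M.
\end{align*}
Because $L(m,\ell)$ carries a square root, I would verify this by checking that $L(m,\ell)$ is a root of the quadratic equation $P(r)Q(r)=M$: its rational part already matches $r^{*}$ by the computation above, so it only remains to match the discriminant, i.e.\ to verify the polynomial identity
\begin{align*}
&(m+\ell^2)(4m^2+7m-2\ell^2+3)^2
 -(m+\ell^2)(m-\ell+1)(m+\ell+1)(4m+2\ell+3)(4m-2\ell+3)\\
&\qquad{}-4\ell^2(m+\ell)(m-\ell)(m+\ell^2+1)
 =\ell^2(4\ell^4+8\ell^2m+5\ell^2+m).
\end{align*}
Both sides are polynomials of degree $6$ in $m$ (and in $\ell$), so this is a routine, if slightly lengthy, expansion; this bookkeeping is the only genuine obstacle in the argument.

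Finally I would assemble the pieces. By Theorem~\ref{thm:lb-ratdlm1m}, for $m\ge 2$ and $1\le\ell\le m-1$ we have $r>L(m,\ell)$. Since $P(r)Q(r)$ is strictly decreasing on $[L(m,\ell),\infty)$, equals $M$ at $r=L(m,\ell)$, and is positive at the actual value of $r$, it follows that $0<P(r)Q(r)<M$, whence
\begin{align*}
\frac{d_\ell(m)^2}{d_{\ell-1}(m)d_{\ell+1}(m)}
=\frac{4\ell(\ell+1)(m+\ell)}{P(r)Q(r)}
>\frac{4\ell(\ell+1)(m+\ell)}{M}
=\frac{(m-\ell+1)(\ell+1)(m+\ell^2)}{(m-\ell)\ell(m+\ell^2+1)},
\end{align*}
which is exactly \eqref{eq:lb-mell1}. (In contrast with the proof of Theorem~\ref{thm:ht-B-M-ell-sharp}, no upper bound on $r$ is needed here: the positivity $d_{\ell+1}(m)>0$ already forces $P(r)Q(r)>0$.)
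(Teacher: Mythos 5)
Your proposal is correct and follows essentially the same route as the paper: both use the recurrences \eqref{eq:reclm1} and \eqref{eq:reclm2} to turn \eqref{eq:lb-mell1} into a quadratic inequality in $r=d_\ell(m+1)/d_\ell(m)$ whose relevant root is exactly $L(m,\ell)$, and then invoke Theorem~\ref{thm:lb-ratdlm1m}. The only difference is presentational — you keep the quadratic factored as $P(r)Q(r)$ and verify the root identity explicitly, whereas the paper expands it as $Ar^2+Br+C$ and reads off the zeros from the discriminant.
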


Chen and Gu had derived the following bounds for $d_\ell(m)^2/(d_{\ell-1}(m)d_{\ell+1}(m))$ \cite[Theorems 1.1, 1.2]{Chen-Gu2009} while studying the reverse ultra log-concavity of the Boros-Moll polynomials.
\begin{theorem}[Chen-Gu]\label{thm:Chen-Gu-1}
For $m\geq 2$ and $1\leq \ell \leq m-1$, there holds
\begin{align}\label{eq:ub-mell2}
\frac{d_\ell(m)^2}{d_{\ell-1}(m)d_{\ell+1}(m)}
<\frac{(m-\ell+1)(\ell+1)}{(m-\ell)\ell}.
\end{align}
\end{theorem}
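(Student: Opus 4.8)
The plan is to trade the two $m$-shift recurrences for a single fixed-$m$ three-term recurrence in the index $\ell$, and then read off \eqref{eq:ub-mell2} as a bound on one consecutive ratio. Set $g_\ell:=\ell!\,d_\ell(m)$. Multiplying \eqref{eq:reclm1} by $(m+1-\ell)$ and subtracting \eqref{eq:reclm2} eliminates $d_\ell(m+1)$ completely, and after dividing by $2$ and clearing the factorials one is left with
\[
g_{\ell+1}=(2m+1)\,g_\ell-(m-\ell+1)(m+\ell)\,g_{\ell-1}.
\]
Since $d_\ell(m)^2/(d_{\ell-1}(m)d_{\ell+1}(m))=\tfrac{\ell+1}{\ell}\,g_\ell^2/(g_{\ell-1}g_{\ell+1})$ and $\tfrac{(m-\ell+1)(\ell+1)}{\ell(m-\ell)}=\tfrac{\ell+1}{\ell}\cdot\tfrac{m-\ell+1}{m-\ell}$, the target \eqref{eq:ub-mell2} is equivalent to the much tidier inequality
\[
(m-\ell)\,g_\ell^{\,2}<(m-\ell+1)\,g_{\ell-1}g_{\ell+1}.
\]
In passing this also makes transparent the log-concavity of $\{\ell!\,d_\ell(m)\}$ advertised in the abstract: that is the companion inequality $g_\ell^2>g_{\ell-1}g_{\ell+1}$, so $\{g_\ell\}$ is log-concave but only barely.

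Next I would substitute the $g$-recurrence for $g_{\ell+1}$ into the tidy inequality. Writing $w_\ell:=g_\ell/g_{\ell-1}$ and dividing by $g_{\ell-1}^2$, it becomes the quadratic condition
\[
(m-\ell)\,w_\ell^{\,2}-(m-\ell+1)(2m+1)\,w_\ell+(m-\ell+1)^2(m+\ell)<0 .
\]
The discriminant collapses to $(m-\ell+1)^2(4\ell^2+4m+1)$, so \eqref{eq:ub-mell2} is equivalent to pinning the single ratio $w_\ell$ strictly between
\[
w_\ell^{\pm}=\frac{(m-\ell+1)\left[(2m+1)\pm\sqrt{4\ell^2+4m+1}\,\right]}{2(m-\ell)} .
\]
Numerically the upper bound $w_\ell<w_\ell^+$ carries a large margin and is easy to maintain, so the real content is the lower bound $w_\ell>w_\ell^-$.

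I would then establish these ratio bounds by induction on $\ell$ at fixed $m$, using the Riccati-type recursion obtained by dividing the $g$-recurrence by $g_\ell$,
\[
w_{\ell+1}=(2m+1)-\frac{(m-\ell+1)(m+\ell)}{w_\ell},
\]
which is strictly increasing in $w_\ell$; the base case $w_1=d_1(m)/d_0(m)$ is checked directly from the explicit expressions for $d_0(m)$ and $d_1(m)$. Monotonicity means a lower bound on $w_\ell$ propagates to a lower bound on $w_{\ell+1}$, which is exactly what is needed.

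The hard part, and where a naive induction breaks, is that the exact interval $(w_\ell^-,w_\ell^+)$ is \emph{not} forward-invariant under this map. Already for $m=3$ one has $w_1^-\approx 2.158$, yet its image $(2m+1)-(m-\ell+1)(m+\ell)/w_1^-=7-12/w_1^-\approx 1.44$ lies below $w_2^-\approx 1.62$, even though the true ratio $w_2\approx 1.63$ does clear the bound. Hence one cannot run the induction with the reverse-ULC endpoints themselves; one must interpose a strictly sharper forward-invariant lower bound $\alpha_\ell$ with $w_\ell^-<\alpha_\ell\le w_\ell$, i.e. one satisfying $(2m+1)-(m-\ell+1)(m+\ell)/\alpha_\ell\ge\alpha_{\ell+1}$ together with $\alpha_\ell>w_\ell^-$. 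Choosing such an $\alpha_\ell$, guided by the $\sqrt{m+\ell^2}$-type expressions that already govern the sharp bounds of Sections \ref{Sec:2}--\ref{Sec:3}, and verifying its two defining inequalities — each of which, after isolating and squaring away the radical, reduces to a polynomial positivity statement in $m$ and $\ell$ — is the technical heart of the argument and the step I expect to absorb essentially all of the work.
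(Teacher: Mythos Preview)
The paper does not prove this theorem at all: it is quoted verbatim as Chen and Gu's result \cite[Theorem~1.1]{Chen-Gu2009} and used as a black box in Section~\ref{Sec:ht-ell}. The method Chen and Gu use, and which the present paper mirrors for the companion lower bound (Theorems~\ref{thm:lb-ratdlm1m} and~\ref{thm:sharper-bd}), runs through the $m$-direction rather than the $\ell$-direction: one first proves an \emph{upper} bound for $d_\ell(m+1)/d_\ell(m)$ by induction on $m$ via the recurrence~\eqref{eq:rec-dlm-m}, and then feeds that bound through~\eqref{eq:reclm1}--\eqref{eq:reclm2} to obtain~\eqref{eq:ub-mell2}. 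Your route---eliminating $d_\ell(m+1)$ to get a pure $\ell$-recurrence for $g_\ell=\ell!\,d_\ell(m)$ and running a Riccati induction on $w_\ell=g_\ell/g_{\ell-1}$---is a genuinely different organisation, and your derivation of $g_{\ell+1}=(2m+1)g_\ell-(m-\ell+1)(m+\ell)g_{\ell-1}$, the quadratic reformulation, the discriminant $(m-\ell+1)^2(4\ell^2+4m+1)$, and the $m=3$ obstruction to naive forward-invariance are all correct.

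That said, what you have written is a plan, not a proof. You correctly diagnose that the entire content of the theorem collapses onto the construction of an auxiliary sequence $\alpha_\ell$ with $w_\ell^-<\alpha_\ell\le w_\ell$ that is forward-invariant under the Riccati map, and you then stop short of producing one. The phrase ``guided by the $\sqrt{m+\ell^2}$-type expressions'' is a hint, not a choice; until an explicit $\alpha_\ell$ is written down and the two polynomial inequalities it must satisfy are actually verified, nothing has been proved. In the $m$-direction argument of Chen--Gu (and of Section~\ref{Sec:2} here) the analogous step is exactly Lemma~\ref{lem:L>A2/R}: a concrete radical expression is exhibited and a page of algebra confirms it works. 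Your proposal is missing precisely that concrete object and that page of algebra, so as it stands it has the same status as saying ``prove an appropriate bound by induction''---true, but not yet a proof.
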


\begin{theorem}[Chen-Gu]\label{thm:Chen-Gu-2}
For $m\geq 2$ and $1\leq \ell \leq m-1$, there holds
\begin{align}\label{eq:lb-mell2}
\frac{d_\ell(m)^2}{d_{\ell-1}(m)d_{\ell+1}(m)}
>\frac{(m-\ell+1)(\ell+1)(m+\ell)}{(m-\ell)\ell(m+\ell+1)}.
\end{align}
\end{theorem}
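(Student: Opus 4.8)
The plan is to collapse the two-parameter ratio to a one-variable estimate controlled by the single quantity $r=d_\ell(m+1)/d_\ell(m)$, for which a sharp lower bound is already in hand. First I would invoke the Kauers--Paule recurrences of Theorem~\ref{Thm:rec-dlm-m}: solving \eqref{eq:reclm1} for $d_{\ell-1}(m)$ and \eqref{eq:reclm2} for $d_{\ell+1}(m)$ expresses both $d_{\ell-1}(m)$ and $d_{\ell+1}(m)$ in terms of $d_\ell(m)$ and $d_\ell(m+1)$ alone. Forming the product and dividing by $d_\ell(m)^2$ yields
\begin{align*}
\frac{d_\ell(m)^2}{d_{\ell-1}(m)d_{\ell+1}(m)}=\frac{4\ell(\ell+1)(m+\ell)}{f(r)},
\end{align*}
where
\begin{align*}
f(r)=\bigl(2(m+1)r-(4m+2\ell+3)\bigr)\bigl((4m-2\ell+3)(m+\ell+1)-2(m+1)(m-\ell+1)r\bigr).
\end{align*}
Here $f(r)>0$, since its two factors are positive multiples of $d_{\ell-1}(m)$ and of $d_{\ell+1}(m)$, respectively. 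Clearing the positive denominators shows that the target \eqref{eq:lb-mell2} is equivalent to the clean inequality
\begin{align*}
f(r)<\frac{4\ell^2(m-\ell)(m+\ell+1)}{m-\ell+1}.
\end{align*}

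Next I would use the shape of $f$ as a quadratic in $r$. Its leading coefficient is $-4(m+1)^2(m-\ell+1)<0$, so $f$ opens downward, with vertex at
\begin{align*}
r^\ast=\frac{4m^2+7m-2\ell^2+3}{2(m+1)(m-\ell+1)}.
\end{align*}
The decisive point is that the Chen--Xia lower bound \eqref{ineq:KP-lb-sharp}, namely $r>K$ with
\begin{align*}
K=\frac{4m^2+7m+\ell+3}{2(m+1)(m-\ell+1)},
\end{align*}
reproduces the threshold exactly when fed into $f$. Indeed, the factorisations $K-\tfrac{4m+2\ell+3}{2(m+1)}=\tfrac{\ell(m+\ell+1)}{(m+1)(m-\ell+1)}$ and $\tfrac{(4m-2\ell+3)(m+\ell+1)}{2(m+1)(m-\ell+1)}-K=\tfrac{\ell(m-\ell)}{(m+1)(m-\ell+1)}$ give $f(K)=\dfrac{4\ell^2(m-\ell)(m+\ell+1)}{m-\ell+1}$, which is precisely the right-hand side of the displayed target.

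It remains to transfer the bound by monotonicity. A one-line computation gives $K-r^\ast=\dfrac{\ell(2\ell+1)}{2(m+1)(m-\ell+1)}>0$, so $K$ sits to the right of the vertex; combined with \eqref{ineq:KP-lb-sharp} this gives $r>K>r^\ast$ for $1\le\ell\le m-1$. Since $f$ is strictly decreasing on $(r^\ast,\infty)$, the inequality $r>K$ forces $f(r)<f(K)$, which is exactly the required estimate and hence \eqref{eq:lb-mell2}. I expect the chief obstacle to be orienting the monotonicity correctly: the naive estimate $f(r)\le f(r^\ast)$, using the global maximum at the vertex, is far too weak and yields a lower bound on the ratio that falls short of the target, so one must first place $r$ on the correct side of the vertex and then insert the \emph{lower} bound $K$ on $r$. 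Verifying $K>r^\ast$ and the identity $f(K)=\tfrac{4\ell^2(m-\ell)(m+\ell+1)}{m-\ell+1}$ are the two short calculations on which the argument hinges.

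Finally, I would record a shortcut available within this paper: since the sharper estimate \eqref{eq:lb-mell1} is established in Theorem~\ref{thm:sharper-bd} and $\tfrac{m+\ell^2}{m+\ell^2+1}\ge\tfrac{m+\ell}{m+\ell+1}$ for $\ell\ge1$ (the map $x\mapsto x/(x+1)$ being increasing while $\ell^2\ge\ell$), inequality \eqref{eq:lb-mell2} is an immediate consequence of \eqref{eq:lb-mell1}. The recurrence argument above is the self-contained route that does not presuppose the sharper bound.
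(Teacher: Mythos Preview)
Your argument is correct. The reduction via the Kauers--Paule recurrences to a downward-opening quadratic $f(r)$ in $r=d_\ell(m+1)/d_\ell(m)$, the identity $f(K)=\tfrac{4\ell^2(m-\ell)(m+\ell+1)}{m-\ell+1}$, and the placement $r>K>r^\ast$ all check out, and the strict Chen--Xia bound \eqref{ineq:KP-lb-sharp} is exactly what is needed on the stated range $1\le\ell\le m-1$.

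As for comparison with the paper: the paper does not give its own proof of this statement---Theorem~\ref{thm:Chen-Gu-2} is quoted from Chen and Gu. What the paper \emph{does} prove is the sharper Theorem~\ref{thm:sharper-bd}, and its method is structurally identical to yours: rewrite the target as a quadratic inequality in $r$ (with the same $A,B,C$ coming from the same recurrences), locate the larger root, and then feed in a lower bound for $r$. The difference is only in which lower bound is used: the paper needs the stronger $r>L(m,\ell)$ of Theorem~\ref{thm:lb-ratdlm1m} (proved by a separate induction via Lemma~\ref{lem:L>A2/R}) to hit the sharper threshold in \eqref{eq:lb-mell1}, whereas you observe that the weaker Chen--Xia bound $r>K$ lands \emph{exactly} on the Chen--Gu threshold, since $f(K)$ equals the right-hand side. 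Your route is therefore the economical one for this particular inequality, avoiding the machinery of Section~\ref{Sec:2}; the paper's route, while heavier, yields the genuinely stronger \eqref{eq:lb-mell1} needed later in Section~\ref{Sec:ht-ell}. Your closing remark that \eqref{eq:lb-mell2} also follows from \eqref{eq:lb-mell1} via monotonicity of $x\mapsto x/(x+1)$ is the paper's implicit derivation.
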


Theorem \ref{thm:Chen-Gu-1} is equivalent to the reverse ultra log-concavity of $P_m(x)$. Theorem \ref{thm:Chen-Gu-2} gives an inequality which is stronger than log-concavity of the sequence $\{d_\ell(m)\}_{\ell=0}^m$, that is, \eqref{eq:lb-mell2} leads to $d_\ell(m)^2/(d_{\ell-1}(m)d_{\ell+1}(m))>(i+1)/i$, and hence implies the log-concavity of the sequence $\{\ell! d_\ell(m)\}_{\ell=0}^m$ for $m\geq 2$ \cite[Corollary 4.1]{Chen-Gu2009}.
These two bounds are very sharp. In the asymptotic sense, they suggest that $P_m(x)$ are just on the borderline between ultra log-concavity and reverse ultra log-concavity. See \cite{Chen-Gu2009} for more details.

In view of the two bounds given in \eqref{eq:lb-mell1} and \eqref{eq:lb-mell2}, it is clear that Theorem \ref{thm:sharper-bd} also implies the log-concavity of the sequence $\{\ell! d_\ell(m)\}_{\ell=0}^m$ for $m\geq 2$.

As will be seen in Section \ref{Sec:ht-ell}, the bounds given in Theorems \ref{thm:sharper-bd} and \ref{thm:Chen-Gu-1} behave perfectly in our proof of Theorem \ref{thm:ht-B-M-ell-sharp}.
We proceed to prove Theorem \ref{thm:sharper-bd}.

\begin{proof}[Proof of Theorem \ref{thm:sharper-bd}]
Applying the recurrence relations \eqref{eq:reclm1} and \eqref{eq:reclm2}, the inequality \eqref{eq:lb-mell1} can be restated as
\begin{align*}
A\left(\frac{d_\ell(m+1)}{d_\ell(m)}\right)^2
+B \left(\frac{d_\ell(m+1)}{d_\ell(m)}\right)
+C>0,
\end{align*}
where
\begin{align*}
A=&\ 4(m+1)^2(m-\ell+1)^2(m+\ell^2),\\
B=&\ -4(m+1)(m-\ell+1)(m+\ell^2)(4m^2+7m-2\ell^2+3),\\
C=&\ 16\ell^2m^4-16\ell^4m^2+40\ell^2m^3-32\ell^4m+16m^5+45\ell^2m^2-17\ell^4\\
&\ +56m^4+29\ell^2m+73m^3+9\ell^2+42m^2+9m.
\end{align*}
The discriminant of the above quadratic function in $d_\ell(m+1)/d_\ell(m)$ is
\begin{align*}
\Delta=16\ell^2(m+1)^2(m-\ell+1)^2(m+\ell^2)(4\ell^4+8\ell^2m+5\ell^2+m)>0,
\end{align*}
for $1\leq \ell \leq m-1$.
So, the above quadratic function has two real zeros,
\begin{align*}
&x_1=\frac{4m^2+7m-2\ell^2+3}{2(m+1)(m-\ell+1)}
  -\frac{\ell\sqrt{4\ell^4+8\ell^2m+5\ell^2+m}}{2(m+1)(m-\ell+1)\sqrt{m+\ell^2}},\\
&x_2=\frac{4m^2+7m-2\ell^2+3}{2(m+1)(m-\ell+1)}
  +\frac{\ell\sqrt{4\ell^4+8\ell^2m+5\ell^2+m}}{2(m+1)(m-\ell+1)\sqrt{m+\ell^2}}.
\end{align*}
It remains to show that for $m\geq 2$ and $1\leq \ell \leq m-1$
\begin{align*}
\frac{d_\ell(m+1)}{d_\ell(m)}>x_2,
\end{align*}
which is proved in Theorem \ref{thm:lb-ratdlm1m}, since $x_2=L(m,\ell)$.
This completes the proof.
\end{proof}

Theorem \ref{thm:sharper-bd} gives a sufficiently sharp lower bound for our proof of Theorem \ref{thm:ht-B-M-ell-sharp}. By using the same method, we obtain a sharper bound, which may be available for some deep results on Boros-Moll sequence. The proof is similar to that for Theorem \ref{thm:sharper-bd}, and hence is omitted here.
\begin{theorem}
For each $m\geq 2$ and $1\leq \ell \leq m-1$, we have
\begin{align}\label{eq:lb-mell2l}
\frac{d_\ell(m)^2}{d_{\ell-1}(m)d_{\ell+1}(m)}
>\frac{(m-\ell+1)(\ell+1)(m+\ell+\ell^2)}{(m-\ell)\ell(m+\ell+\ell^2+1)}.
\end{align}
\end{theorem}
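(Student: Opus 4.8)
The plan is to mimic, with the obvious changes, the two-step argument of Sections~\ref{Sec:2} and~\ref{Sec:3}. First I would eliminate $d_{\ell-1}(m)$ and $d_{\ell+1}(m)$ by means of the recurrences \eqref{eq:reclm1} and \eqref{eq:reclm2}; exactly as in the proof of Theorem~\ref{thm:sharper-bd}, inequality \eqref{eq:lb-mell2l} then turns into a quadratic inequality
\[
\widetilde A\left(\frac{d_\ell(m+1)}{d_\ell(m)}\right)^2+\widetilde B\left(\frac{d_\ell(m+1)}{d_\ell(m)}\right)+\widetilde C>0
\]
in $t=d_\ell(m+1)/d_\ell(m)$, where $\widetilde A,\widetilde B,\widetilde C$ are explicit polynomials obtained from $A,B,C$ (written in factored form) by the substitution $m+\ell^2\mapsto m+\ell+\ell^2$ and $m+\ell^2+1\mapsto m+\ell+\ell^2+1$; in particular $\widetilde A=4(m+1)^2(m-\ell+1)^2(m+\ell+\ell^2)>0$. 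A routine computation gives the discriminant
\[
\widetilde\Delta=16\ell^2(m+1)^2(m-\ell+1)^2(m+\ell+\ell^2)\bigl(4\ell^4+4\ell^3+8\ell^2m+5\ell^2+4\ell m+\ell+m\bigr),
\]
which is positive for $1\le\ell\le m-1$, so the quadratic has two real roots, the larger being
\[
\widetilde L(m,\ell)=\frac{4m^2+7m-2\ell^2+3}{2(m+1)(m-\ell+1)}+\frac{\ell\sqrt{4\ell^4+4\ell^3+8\ell^2m+5\ell^2+4\ell m+\ell+m}}{2(m+1)(m-\ell+1)\sqrt{m+\ell+\ell^2}}.
\]
Since $\widetilde A>0$, it suffices to prove $d_\ell(m+1)/d_\ell(m)>\widetilde L(m,\ell)$ for all $m\ge 2$ and $1\le\ell\le m-1$.

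This lower bound sharpens Theorem~\ref{thm:lb-ratdlm1m}: one checks $\widetilde L(m,\ell)\ge L(m,\ell)$, with equality only at the endpoints $\ell\in\{0,m\}$, so the induction of Section~\ref{Sec:2} has to be rerun with $\widetilde L$ in place of $L$. The base case $m=2$, $\ell=1$ is a numerical check. For the inductive step, relation \eqref{eq:rec-dlm-m2} rewrites the target $d_\ell(m+2)>\widetilde L(m+1,\ell)d_\ell(m+1)$ as
\[
\widetilde R(m,\ell)\,d_\ell(m+1)>\frac{(4m+3)(4m+5)(m+\ell+1)}{4(m+1)(m+2)(m-\ell+2)}\,d_\ell(m)\qquad(1\le\ell\le m-1),
\]
where $\widetilde R(m,\ell)=\dfrac{8m^2+24m+19-4\ell^2}{2(m+2)(m-\ell+2)}-\widetilde L(m+1,\ell)$ is the counterpart of $R(m,\ell)$; after checking $\widetilde R(m,\ell)>0$, this follows from the induction hypothesis together with the counterpart of Lemma~\ref{lem:L>A2/R}, namely
\[
\widetilde L(m,\ell)>\frac{(4m+3)(4m+5)(m+\ell+1)}{4(m+1)(m+2)(m-\ell+2)\,\widetilde R(m,\ell)}.
\]
The remaining boundary case $\ell=m$ is disposed of by the explicit value $d_m(m+2)/d_m(m+1)=(m+1)(4m^2+18m+21)/\bigl(2(m+2)(2m+3)\bigr)$, exactly as in the proof of Theorem~\ref{thm:lb-ratdlm1m}. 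This establishes $d_\ell(m+1)/d_\ell(m)>\widetilde L(m,\ell)$, and hence, by the reduction above, proves \eqref{eq:lb-mell2l}.

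The hard part is the counterpart of Lemma~\ref{lem:L>A2/R}. As in its proof, once $\widetilde R(m,\ell)>0$ is known the two nested square roots are cleared by squaring twice, but the radicands now carry the extra terms $4\ell^3+4\ell m+\ell$ inherited from the shift $m+\ell^2\mapsto m+\ell+\ell^2$, so the resulting polynomial identities in $m$ and $\ell$ are heavier than those in the proof of Lemma~\ref{lem:L>A2/R}. I expect the positivity of the final polynomial over $1\le\ell\le m-1$ to come out, as before, by pulling out the factor $(m+\ell+1)(m-\ell+1)$ and checking that the complementary factor has only nonnegative coefficients as a polynomial in $m$ and $\ell$. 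This verification is routine but long, which is why the proof is only sketched here.
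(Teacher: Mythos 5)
Your proposal follows exactly the route the paper intends: the paper omits the proof of this theorem, stating only that it is ``similar to that for Theorem~\ref{thm:sharper-bd}'', and your reduction to a quadratic in $t=d_\ell(m+1)/d_\ell(m)$ followed by a rerun of the Section~\ref{Sec:2} induction with a modified root is precisely that similarity spelled out. Your explicit formulas check out: writing $\sigma=m+\ell+\ell^2$, the discriminant of the cleared quadratic is $16\ell^2(m+1)^2(m-\ell+1)^2\sigma\bigl[(4\ell^2+4m+1)\sigma-4m^2+4\ell^2\bigr]$, and the bracket equals your radicand $4\ell^4+4\ell^3+8\ell^2m+5\ell^2+4\ell m+\ell+m$; your $\widetilde A$ and $\widetilde L(m,\ell)$ are likewise correct, and $\widetilde L>L$ strictly for $1\le\ell\le m-1$ (since $\rho/\sigma=(4\ell^2+4m+1)+(4\ell^2-4m^2)/\sigma$ is increasing in $\sigma$ when $\ell<m$), so you are right that the induction must be redone rather than quoted.

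Two points keep this from being a complete proof. First, the analogue of Lemma~\ref{lem:L>A2/R} is only asserted: you ``expect'' the final polynomial after squaring twice to be positive, but that verification is the entire technical content of the argument and is not carried out, so the proof as written has a hole exactly where the work is. Second, the boundary case $\ell=m$ of the induction is \emph{not} ``exactly as in'' the proof of Theorem~\ref{thm:lb-ratdlm1m}: although $\widetilde L(m,m)=L(m,m)$ (consistent with \eqref{eq:dlmlb0m}), the induction step requires $d_m(m+2)/d_m(m+1)>\widetilde L(m+1,m)$, and $\widetilde L(m+1,m)>L(m+1,m)$ strictly because there $\ell=m<m+1$; so this is a genuinely stronger inequality than the one verified in Section~\ref{Sec:2} and needs its own (short) computation. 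Neither issue suggests the approach fails --- it is the paper's own intended argument --- but both steps must actually be done before the proof is complete.
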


\section{Proof of Theorem \ref{thm:ht-B-M-ell-sharp}}\label{Sec:ht-ell}

The objective of this section is to show a simple proof of Theorem \ref{thm:ht-B-M-ell-sharp}, the strict higher order Tur\'{a}n inequalities for the Boros-Moll sequences $\{d_\ell(m)\}_{\ell=0}^m$.
To this end, we need a sufficient condition given by Hou and Li \cite[Theorem 5.2]{Hou-Li2021}.

\begin{theorem}[Hou-Li]\label{thm:Hou-Li}
Let $\{a_n\}_{n\geq 0}$ be a real sequence with positive numbers. Let
$$
d(x,y)=4(1-x)(1-y)-(1-xy)^2.
$$
If there exist an integer $N$, and two functions $g(n)$ and $h(n)$ such that for all $n\geq N$,
\begin{itemize}
\item[$(i)$] $g(n)<a_{n-1}a_{n+1}/a_n^2<h(n)$;
\item[$(ii)$] $d(g(n),g(n+1))>0$, $d(g(n),h(n+1))>0$, $d(h(n),g(n+1))>0$, $d(h(n),h(n+1))>0$,
\end{itemize}
then $\{a_n\}_{n\geq N-1}$\footnote{Note that it was showed $\{a_n\}_{n\geq N}$ in the original literature. It is easy to see that the result is also true for $\{a_n\}_{n\geq N-1}$ for $N\geq 1$.} satisfies the higher order Tur\'{a}n inequalities.
\end{theorem}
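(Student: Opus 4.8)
The plan is to translate the degree-four higher order Turán expression into a single evaluation of the function $d$ at two consecutive values of the ratio sequence. Set $r_n=a_{n-1}a_{n+1}/a_n^2$, so that condition $(i)$ reads $g(n)<r_n<h(n)$. First I would record the elementary identities $a_n^2-a_{n-1}a_{n+1}=a_n^2(1-r_n)$ and $a_{n+1}^2-a_na_{n+2}=a_{n+1}^2(1-r_{n+1})$, and, by multiplying the defining relations $a_{n-1}a_{n+1}=r_na_n^2$ and $a_na_{n+2}=r_{n+1}a_{n+1}^2$ and cancelling the positive factor $a_na_{n+1}$, the identity $a_{n-1}a_{n+2}=r_nr_{n+1}a_na_{n+1}$, whence $a_na_{n+1}-a_{n-1}a_{n+2}=a_na_{n+1}(1-r_nr_{n+1})$. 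Substituting these into the left-hand side of \eqref{eq:ho-Turan} and factoring out the positive quantity $a_n^2a_{n+1}^2$ should yield the clean identity
\begin{align*}
4(a_n^2-a_{n-1}a_{n+1})(a_{n+1}^2-a_na_{n+2})-(a_na_{n+1}-a_{n-1}a_{n+2})^2
=a_n^2a_{n+1}^2\,d(r_n,r_{n+1}).
\end{align*}
Since the sequence is positive, the higher order Turán inequality at index $n$ holds strictly if and only if $d(r_n,r_{n+1})>0$.

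It then remains to prove $d(r_n,r_{n+1})>0$ knowing only that $(r_n,r_{n+1})$ lies in the open rectangle $\bigl(g(n),h(n)\bigr)\times\bigl(g(n+1),h(n+1)\bigr)$, together with the positivity of $d$ at the four corners of the closed rectangle, which is exactly condition $(ii)$. The structural fact I would exploit is that $d$ is \emph{separately concave}: a direct computation gives $\partial^2 d/\partial x^2=-2y^2\le 0$ and $\partial^2 d/\partial y^2=-2x^2\le 0$, so for each fixed $y$ the map $x\mapsto d(x,y)$ is concave, and likewise for each fixed $x$.

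Separate concavity forces the minimum of $d$ over a rectangle to be attained at a corner. Concretely, for fixed $y$ concavity gives $d(x,y)\ge\min\{d(g(n),y),d(h(n),y)\}$ for all $x\in[g(n),h(n)]$; applying concavity in $y$ to each of $d(g(n),\cdot)$ and $d(h(n),\cdot)$ bounds both below by the relevant corner values. Chaining these estimates yields
\begin{align*}
d(x,y)\ge\min\{d(g(n),g(n+1)),\,d(g(n),h(n+1)),\,d(h(n),g(n+1)),\,d(h(n),h(n+1))\}
\end{align*}
for every $(x,y)$ in the closed rectangle. By condition $(ii)$ the right-hand side is strictly positive, so in particular $d(r_n,r_{n+1})>0$, which gives the strict higher order Turán inequality at index $n$. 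Conditions $(i)$ and $(ii)$ are assumed for all $n\ge N$, and the argument consumes $(i)$ at $n$ and at $n+1$ and $(ii)$ at $n$; it therefore delivers \eqref{eq:ho-Turan} at every index $n\ge N$, which after re-indexing corresponds precisely to the tail $\{a_n\}_{n\ge N-1}$ having its first admissible instance at original index $n=N$, justifying the footnote's improvement from $N$ to $N-1$.

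I do not expect a genuine analytic obstacle here: once the factorization in the first paragraph is in hand, the whole argument reduces to the observation of separate concavity and the standard reduction of a separately concave function's minimum to the corners. The real crux is the collapse of the quartic expression into $a_n^2a_{n+1}^2\,d(r_n,r_{n+1})$, since it is this identity that makes the function $d$ and condition $(ii)$ relevant at all. The only point requiring care is the index bookkeeping, namely tracking exactly which instances of $(i)$ and $(ii)$ are used so that the claimed starting point $N-1$ is matched rather than left off by one.
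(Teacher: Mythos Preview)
The paper does not supply its own proof of this statement: Theorem~\ref{thm:Hou-Li} is quoted from Hou and Li \cite[Theorem~5.2]{Hou-Li2021} and used as a black box, with only the Remark afterwards noting that strictness follows from the strictness of conditions $(i)$ and $(ii)$. So there is nothing in the paper to compare your argument against line by line.

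That said, your proof is correct and is essentially the natural argument. The factorization
\[
4(a_n^2-a_{n-1}a_{n+1})(a_{n+1}^2-a_na_{n+2})-(a_na_{n+1}-a_{n-1}a_{n+2})^2=a_n^2a_{n+1}^2\,d(r_n,r_{n+1})
\]
is exactly the reduction that makes the criterion work, and your observation that $\partial_{xx}d=-2y^2\le 0$, $\partial_{yy}d=-2x^2\le 0$ gives separate concavity, forcing the infimum over the rectangle to sit at a corner, is the clean way to pass from condition $(ii)$ to $d(r_n,r_{n+1})>0$. Your chaining ``concave in $x$ first, then concave in $y$ on each edge'' avoids the trap of claiming joint concavity (which $d$ does not have). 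The index bookkeeping is also right: verifying \eqref{eq:ho-Turan} at index $n$ uses $(i)$ at $n$ and $n+1$ and $(ii)$ at $n$, all available for $n\ge N$, which matches the sequence starting at $N-1$ in the sense of the paper's definition and justifies the footnote. Finally, because all inequalities in $(i)$ and $(ii)$ are strict, your argument in fact yields the strict higher order Tur\'{a}n inequalities, recovering the content of the Remark following the theorem.
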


\begin{remark}
It is easy to check that the sequence described in Theorem \ref{thm:Hou-Li} satisfies the higher order Tur\'{a}n inequalities strictly since all the inequalities in conditions $(i)$ and $(ii)$ are strict.
\end{remark}

We are now ready to prove Theorem \ref{thm:ht-B-M-ell-sharp}.

\begin{proof}[Proof of Theorem \ref{thm:ht-B-M-ell-sharp}]
Fix $m\geq 2$. For $1\leq \ell \leq m-1$, by Theorems \ref{thm:sharper-bd} and \ref{thm:Chen-Gu-1}, we have
\begin{align}\label{ineq:two-bds}
 \frac{(m-\ell)\ell}{(m-\ell+1)(\ell+1)}
<\frac{d_{\ell-1}(m)d_{\ell+1}(m)}{d_\ell(m)^2}
<\frac{(m-\ell)\ell(m+\ell^2+1)}{(m-\ell+1)(\ell+1)(m+\ell^2)}.
\end{align}
In order to use Theorem \ref{thm:Hou-Li}, for $1\leq n \leq m-1$, set $a_n=d_n(m)$ and
$$
g(n)=\frac{(m-n)n}{(m-n+1)(n+1)},\qquad
h(n)=\frac{(m-n)n(m+n^2+1)}{(m-n+1)(n+1)(m+n^2)}.
$$
Let $N=1$. Then by \eqref{ineq:two-bds}, the condition $(i)$ in Theorem \ref{thm:Hou-Li} is satisfied for $N\leq n \leq m-1$.

It suffices to verify the four inequalities in $(ii)$ of Theorem \ref{thm:Hou-Li}.
By a direct computation, we have
\begin{align*}
 d(g(n),g(n+1))
&=\frac{4(m+1)^2(m+2)}{(m-n)(m-n+1)^2(n+1)(n+2)^2}>0
\end{align*}
for $1\leq n\leq m-1$.
Similarly, we obtain that
\begin{align*}
 d(g(n),h(n+1))
&=\frac{F}{(m-n)(m-n+1)^2(n+1)(n+2)^2(n^2+2n+m+1)^2},
\end{align*}
where
\begin{align*}
 F=
&\ 4m^3n^4+8m^4n^2+7m^3n^3+mn^4(31m-7n)+n^6+4m^5+8m^4n+43m^3n^2+93m^2n^3\\
&\ +n^4(17m-n)+16m^4+60m^3n+162m^2n^2+115mn^3+3n^4+40m^3+156m^2n\\
&\ +203mn^2+41n^3+64m^2+164mn+80n^2+52m+60n+16>0.
\end{align*}
Clearly, $d(g(n),h(n+1))>0$ for $1\leq n\leq m-1$.
Moreover,
\begin{align*}
 d(h(n),g(n+1))
&=\frac{G}{(m-n)(m-n+1)^2(n^2+m)^2(n+1)(n+2)^2},
\end{align*}
where
\begin{align*}
 G=
&\ 4m^3n^4+m^3n^2(8m-5n)+19m^2n^4+mn^5+n^6+4m^4(m-n)+m^2n^2(31m-7n)\\
&\ +25mn^4+7n^5+16m^3(m-n)+mn^2(54m-13n)+23n^4+m^2(20m-12n)\\
&\ +27mn^2+n^3+8m^2.
\end{align*}
Observe that $G>0$ and hence $d(h(n),g(n+1))>0$ for  $1\leq n\leq m-1$.
Finally, we have
\begin{align*}
 d(h(n),h(n+1))
=\frac{4H}{(m-n)(m-n+1)^2(n^2+m)^2(n^2+2n+m+1)^2(n+1)(n+2)^2},
\end{align*}
where
\begin{align*}
H=&\,
(m+3)(m+2)^2n^8+(m+8)(m+3)(m+2)n^7+(4m^4+24m^3+47m^2\\
&\, +62m+73)n^6+(3m^4+38m^3+101m^2+67m+51)n^5+(6m^5+31m^4\\
&\, +60m^3+117m^2+65m+15)n^4+(3m^5+38m^4+68m^3+83m^2\\
&\, +43m+1)n^3+(m^2+m)(4m^4+14m^3+31m^2+22m+13)n^2\\
&\, +m^2(m+9)(m+1)^3n+m^2(m^2+m+4)(m+1)^3>0.
\end{align*}
It is clear that $d(h(n),h(n+1))>0$ for  $1\leq n\leq m-1$.
So the four inequalities in $(ii)$ of Theorem \ref{thm:Hou-Li} hold for each $m\geq 2$ and $1\leq n\leq m-1$.

Thus, for each $m\geq 3$, we have that $\{a_n\}_{n=0}^m$, i.e., $\{d_\ell(m)\}_{\ell=0}^m$, satisfies the higher order Tur\'{a}n inequalities strictly.
\end{proof}

\section*{Acknowledgments}
The author would like to thank Qing-Hu Hou and Larry X. W. Wang for helpful discussion.

\end{document}